\documentclass[11pt,a4paper,reqno]{amsart}
\usepackage{amsmath,amsfonts,amssymb,amsthm,mathtools,mathrsfs}
\usepackage[dvipsnames]{xcolor}
\usepackage{booktabs,array,tabularx,microtype,cite,needspace,placeins}
\usepackage[pagebackref=true,colorlinks=false]{hyperref}
\hypersetup{pdfborder={0 0 0.65},linkbordercolor={1 0 0},citebordercolor={0 0.65 0},urlbordercolor={0 0 1},pdftitle={Disproof of Wright's Problems 14 and 15 on lacunary maximal functions},pdfauthor={}}
\allowdisplaybreaks[2]
\renewcommand*{\backref}[1]{}

\renewcommand*{\backrefalt}[4]{\ifcase #1\relax\or\space{\footnotesize #2}\else\space{\footnotesize #2}\fi}

\DeclareRobustCommand{\Thmref}[1]{\mbox{\hyperref[#1]{Theorem~\ref*{#1}}}}
\DeclareRobustCommand{\Lemref}[1]{\mbox{\hyperref[#1]{Lemma~\ref*{#1}}}}
\DeclareRobustCommand{\Propref}[1]{\mbox{\hyperref[#1]{Proposition~\ref*{#1}}}}
\DeclareRobustCommand{\Corref}[1]{\mbox{\hyperref[#1]{Corollary~\ref*{#1}}}}
\DeclareRobustCommand{\Remref}[1]{\mbox{\hyperref[#1]{Remark~\ref*{#1}}}}
\DeclareRobustCommand{\Secref}[1]{\mbox{\hyperref[#1]{Section~\ref*{#1}}}}
\DeclareRobustCommand{\Subref}[1]{\mbox{\hyperref[#1]{Subsection~\ref*{#1}}}}
\DeclareRobustCommand{\Appref}[1]{\mbox{\hyperref[#1]{Appendix~\ref*{#1}}}}
\DeclareRobustCommand{\Tabref}[1]{\mbox{\hyperref[#1]{Table~\ref*{#1}}}}
\DeclareRobustCommand{\Probref}[1]{\mbox{\hyperref[#1]{Problem~\ref*{#1}}}}
\DeclareRobustCommand{\Eqref}[1]{\mbox{\hyperref[#1]{(\ref*{#1})}}}
\DeclareRobustCommand{\Extref}[2]{\mbox{\hyperlink{cite.#1}{#2}}}
\newcommand{\arxiv}[1]{\mbox{\href{https://arxiv.org/abs/#1}{arXiv\,\nolinkurl{#1}}}}
\newcommand{\doi}[1]{\mbox{\href{https://doi.org/#1}{doi:\nolinkurl{#1}}}}
\newcommand{\R}{\mathbb R}\newcommand{\Z}{\mathbb Z}
\newcommand{\dd}{\,\mathrm d}\newcommand{\ind}[1]{\mathbf1_{#1}}
\newcommand{\abs}[1]{\lvert#1\rvert}\newcommand{\norm}[1]{\lVert#1\rVert}
\DeclareMathOperator{\supp}{supp}\DeclareMathOperator{\dist}{dist}
\newcommand{\Ha}{H^1_{\mathrm{at}}}
\newcommand{\Mmu}{\mathcal M_\mu}
\newcommand{\Mmus}[1]{\mathcal M_{\mu,#1}}
\newcommand{\DM}[1]{\mathsf D_{#1}}\newcommand{\BM}[1]{\mathsf M_{#1}}
\numberwithin{equation}{section}
\newtheorem{theorem}{Theorem}[section]

\newtheorem{lemma}{Lemma}[section]
\theoremstyle{definition}\newtheorem{problem}{Problem}[section]
\theoremstyle{remark}\newtheorem{remark}{Remark}[section]
\newcolumntype{Y}{>{\raggedright\arraybackslash}X}
\begin{document}
\title[Disproof of Wright's Problems 14 and 15]{\bf Disproof of Wright's Problems 14 and 15 on lacunary maximal functions}

\author[X. Cen]{Xi Cen}
\address{Xi Cen\\
School of Science\\
China University of Mining and
Technology-Beijing\\
Beijing 100083 \\
People's Republic of China}\email{xicenmath@gmail.com}

\author[]{Guixiang Hong}
\address{Guixiang Hong\\
Institute for Advanced Study in Mathematics\\
Harbin Institute of Technology\\
Harbin
150001\\
China}
\email{gxhong@hit.edu.cn}

\author[X. Wu]{Xinfeng Wu}
\address{Xinfeng Wu\\
	School of Science\\
	China University of Mining and
	Technology-Beijing\\
	Beijing 100083 \\
	People's Republic of China}\email{wuxf@cumtb.edu.cn}

\begin{abstract}
We disprove the proposed weak endpoint bounds in James Wright's Problems 14 and 15 from the 2017 collection of harmonic-analysis problems. For every $d\ge1$, we exhibit a fixed compactly supported singular probability measure with Fourier decay of every order $0<\varepsilon<1/2$, for which the dyadic convolution maximal operator has infinite weak $L^1$ and infinite $H^1$ to weak $L^1$ operator norms. For every finite nonempty set of dyadic scales, both optimal weak constants equal the number of scales, with the Hardy constant measured in the specified atomic norm. Increasingly separated renewal digits provide uniform real-frequency Fourier decay while allowing the disjoint output regions needed for the positive and mean-zero tests. We first establish the fixed measure, then prove the two endpoint estimates using separate normalized tests. Finally we prove the two Fourier lemmas used in the construction.
\end{abstract}
\maketitle
\begin{center}\begin{minipage}{120mm}\small
\textbf{Key Words:} Lacunary maximal operators, Fourier decay, singular measures, Hardy spaces, Salem sets.\\[3pt]
\textbf{AMS Classification:} 42B25, 42B30, 28A80, 42A38.
\end{minipage}\end{center}

\section{Introduction}\label{sec:intro}
\subsection{Background and motivations}\label{sub:background}
Let $\mu$ be a compactly supported finite Borel measure on $\R^d$. Its dyadic dilates are defined by
\begin{equation}\label{eq:dilate}
 \int_{\R^d}\varphi(y)\dd\mu_k(y):=\int_{\R^d}\varphi(2^k y)\dd\mu(y),\qquad k\in\Z.
\end{equation}
In the notation of Wright, the lacunary convolution maximal operator associated with the fixed measure $\mu$ is
\begin{equation}\label{eq:original}
 \Mmu f(x):=\sup_{k\in\Z}\abs{f*\mu_k(x)}.
\end{equation}
For any finite nonempty $S\subseteq\Z$, we also write $\Mmus{S}f(x):=\max_{k\in S}|f*\mu_k(x)|$. The subscript $\mu$ records the underlying measure; no extra lacunary label is needed because the dyadic sequence is part of the definition.
The supremum in \Eqref{eq:original} is over the fixed dyadic sequence, not over scales chosen during the construction of $\mu$. The distinction between these two kinds of scales is essential in a counterexample.

Fourier-transform estimates provide a general method for proving maximal inequalities. The theory of Duoandikoetxea and Rubio de Francia \cite{DR} implies that a compactly supported measure with a positive power of Fourier decay has a bounded dyadic maximal operator on $L^p(\R^d)$ for $1<p\le\infty$. The weak $L^1$ endpoint asks whether this cancellation information alone controls the exceptional sets produced by many dilations. Seeger and Wright \cite{SW} discussed this question together with the associated differentiation and Hardy-space problems.

The 2017 collection of Grafakos, Oliveira e Silva, Pramanik, Seeger and Stovall \cite{GOPSS} arose from the 2016 Madison conference in honor of Michael Christ. In Section 10, contributed by James Wright, the following question appears as Problem 14 on page 15. The source formulates a question rather than a conjecture attributed jointly to the five authors. We record its quantitative meaning, restricting to positive probabilities, which already suffices for a negative answer.
\begin{problem}[Wright's Problem 14, {\cite[p.~15]{GOPSS}}]\label{prob:original}
Suppose that $\mu$ is a compactly supported finite positive Borel measure on $\R^d$ and, for some $A<\infty$ and $\varepsilon>0$,
\begin{equation}\label{eq:assumption}
 |\widehat\mu(\xi)|\le A(1+|\xi|)^{-\varepsilon},\qquad\xi\in\R^d.
\end{equation}
Does
\begin{equation}\label{eq:question}
 \|\Mmu\|_{L^1(\R^d)\to L^{1,\infty}(\R^d)}<\infty
\end{equation}
hold? Here the weak operator norm is the distributional norm defined in \Eqref{eq:weaknorm} below.
\end{problem}
\begin{problem}[Wright's Problem 15, {\cite[p.~15]{GOPSS}}]\label{prob:hardy}
Under the same Fourier-decay hypothesis, does
\begin{equation}\label{eq:hardyquestion}
 \|\Mmu\|_{H^1(\R^d)\to L^{1,\infty}(\R^d)}<\infty
\end{equation}
hold?
\end{problem}
The putative bound in \Eqref{eq:question} is allowed to depend on the entire measure. Consequently, a sequence of different measures with increasing weak constants would not by itself answer this question. The example below uses one fixed measure and a sequence of test functions.

The Hardy-space question in \Probref{prob:hardy} is logically separate from the $L^1$ question in \Probref{prob:original}. This endpoint holds for spherical surface measure by Christ's theorem \cite{Christ}. Seeger and Wright \cite[\Extref{SW}{Theorem~1.1}]{SW} prove a general Hardy-space result under a support-covering condition coupled with an optimal smoothing hypothesis. The covering and smoothing exponents in that theorem are coupled. In 1D its covering hypothesis would require
$\mathcal N_{\supp\mu}(r)\lesssim r^{-(1-s)}$ for some $0<s\le1$, where $\mathcal N_E(r)$ is the least number of radius-$r$ balls covering $E$. That condition forces $\dim_H(\supp\mu)\le1-s<1$. The 1D support constructed below has Hausdorff dimension one and therefore does not satisfy this hypothesis.

For lacunary spherical means, Seeger, Tao and Wright \cite{STW} established an $L\log\log L$ differentiation result. Cladek and Krause \cite{CK} subsequently obtained an endpoint closer to $L^1$, involving iterated logarithms. These results concern the specific geometry of the sphere. Our measures are supported on highly nonuniform Cantor sets, and the counterexample does not apply to spherical surface measure.

Fractal averaging problems supply a different comparison. \L aba and Pramanik \cite{LPmax} constructed singular measures with bounded maximal operators for suitable $p>1$ and studied differentiation by their translates and continuous dilates. A failure involving arbitrary positive dilations cannot simply be restricted to the dyadic subsequence. Uniform finite Fourier-mask approximation appears in \L aba--Pramanik \cite[\Extref{LParith}{Lemma~6.2}]{LParith} and is reproduced in Hambrook--\L aba \cite[\Extref{HL}{Lemma~7}]{HL}. The simultaneous requirement of increasingly separated integer digits is the additional constraint needed here.

Lai, Shi and Xie \cite{LSX} construct deterministic Salem spectral measures by finite-field digit sets and a homogeneous Cantor--Moran construction. Their \Extref{LSX}{Lemmas~5.2--5.3} and \Extref{LSX}{Proposition~5.4} provide the adjacent-mask framework used below, including the third factor at the upper edge of a frequency block. We reproduce the estimates with the precise hypotheses needed for our different digits. The unconstrained mask approximation and the adjacent-mask argument are prior inputs. The additional assertion established here is their compatibility with a minimum digit spacing tending to infinity, which supplies the positive and signed endpoint tests for one fixed measure.

Related 2026 work addresses continuous dilations with restricted weak bounds at exponents $p>1$ \cite{HKLL}, lacunary spherical Hardy endpoints \cite{CLLS}, and arithmetic spherical averages on $\Z^d$ \cite{LLLS}. These settings are not the fixed Euclidean measure with dyadic dilations in \Probref{prob:original} and \Probref{prob:hardy}.

\begin{table}[htbp]
\centering\small\renewcommand{\arraystretch}{1.22}
\begin{tabularx}{\textwidth}{@{}>{\raggedright\arraybackslash}p{.28\textwidth}YY@{}}\toprule
Setting & Available hypothesis or result & Distinction from the present theorem\\\midrule
General dyadic measures \cite{DR,SW,GOPSS} & Positive Fourier decay gives $L^p$, $p>1$ & \Thmref{thm:main} gives exact finite-scale weak $L^1$ failure\\
Spherical and geometric measures \cite{Christ,SW,CK,CLLS} & Hardy or Orlicz endpoints with specific geometry & Our examples also fail on genuine Hardy atoms\\
Continuous fractal averages \cite{LPmax,HKLL} & Continuous-dilation bounds and differentiation questions & Our lower bounds use the original dyadic scales\\
Salem--Moran measures \cite{LSX} & Fourier decay and spectrality & Increasing digit separation is added, not spectrality\\\bottomrule
\end{tabularx}
\caption{The hypotheses, scale families and endpoint spaces are kept separate.}\label{tab:comparison}
\end{table}

\FloatBarrier

\subsection{Main results}\label{sub:main}
For an arbitrary (possibly sublinear) operator $T$ and a normed input space $X\subseteq L^1_{\mathrm{loc}}$, fix the distributional convention
\begin{equation}\label{eq:weaknorm}
 \|T\|_{X\to L^{1,\infty}}:=\sup_{0\ne f\in X}\sup_{\lambda>0}\frac{\lambda\big|\{x:|Tf(x)|>\lambda\}\big|}{\|f\|_X}.
\end{equation}
This fixes the distributional weak quasi-norm, rather than another equivalent Lorentz normalization.

For a finite nonempty $S\subseteq\Z$, let
\begin{equation}\label{eq:weakconstant}
 \mathfrak C_S(\mu):=
 \sup_{\substack{0\ne f\in L^1(\R^d)\\\lambda>0}}
 \frac{\lambda\bigl|\{x:\Mmus{S}f(x)>\lambda\}\bigr|}{\norm f_1}.
\end{equation}
For every probability measure, the union bound gives $\mathfrak C_S(\mu)\le\#S$. The same bound applies to the atomic Hardy norm because $\norm f_1\le\norm f_{\Ha}$.

Let $\mathfrak H_S(\mu)$ denote the analogue of \Eqref{eq:weakconstant} with the denominator replaced by the atomic Hardy norm $\norm f_{\Ha}$; its normalization is specified in \Subref{sub:notation}.

\begin{theorem}[Disproof of Wright's Problems 14 and 15]\label{thm:main}
For every integer $d\ge1$, there exists a fixed nonnegative probability measure $\mu_d$ supported in $[1,5/4]^d$ and carried by a compact Lebesgue-null set, such that
\begin{equation}\label{eq:allpowers}
 \abs{\widehat{\mu_d}(\xi)}\le C_{d,\varepsilon}(1+\abs\xi)^{-\varepsilon}
 \quad\text{for every }0<\varepsilon<\tfrac12\text{ and every }\xi\in\R^d,
\end{equation}
and
\begin{equation}\label{eq:exact}
 \mathfrak C_S(\mu_d)=\mathfrak H_S(\mu_d)=\#S
 \quad\text{for every finite nonempty }S\subseteq\Z.
\end{equation}
In particular,
\[
 \|\mathcal M_{\mu_d}\|_{L^1(\R^d)\to L^{1,\infty}(\R^d)}
 =\|\mathcal M_{\mu_d}\|_{H^1(\R^d)\to L^{1,\infty}(\R^d)}=+\infty.
\]
Both \Probref{prob:original} and \Probref{prob:hardy} have negative answers. In 1D, the compact support has Hausdorff dimension and Fourier dimension equal to one.
\end{theorem}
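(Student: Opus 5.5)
The plan has two halves: construct one fixed measure $\mu_d$, then show that for this single measure both endpoint constants in \Eqref{eq:exact} are attained by normalized test functions.

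\emph{Construction.} In dimension one I would build $\mu=\mu_1$ as an infinite Bernoulli-type convolution
\[
\mu=\mathop{\ast}_{j\ge1}\Bigl(\frac1{\#D_j}\sum_{a\in D_j}\delta_{a/N_1\cdots N_j}\Bigr),
\]
translated into $[1,5/4]$. Here $D_j\subseteq\{0,\dots,N_j-1\}$ are digit sets with $\#D_j\to\infty$ and $\#D_j/N_j\to0$, so the support is Lebesgue-null but still has Hausdorff dimension one. The digits are chosen so that two conditions hold together:
\begin{enumerate}
\item[(a)] the normalized mask $m_j(\xi)=\#D_j^{-1}\sum_{a\in D_j}e^{-2\pi i a\xi/N_j}$ satisfies $\sup_{\xi\notin\Z}$-type bounds $\abs{m_j(\xi)}\lesssim \#D_j^{-1/2}\log N_j$ away from integer frequencies (Salem-type, as in \L aba--Pramanik);
\item[(b)] the minimum spacing between distinct digits, rescaled, tends to infinity relative to the dyadic factor: consecutive digits differ by at least $M_j\to\infty$ blocks.
\end{enumerate}
These are exactly the two Fourier lemmas announced in the abstract, and I would establish them by a probabilistic selection of digits inside a sparse arithmetic grid, followed by a union bound over a finite frequency net. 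The decay \Eqref{eq:allpowers} for every $\varepsilon<1/2$ then follows from the adjacent-mask argument of Lai--Shi--Xie. For $\abs\xi\sim N_1\cdots N_k$, two consecutive masks (and a third at the upper edge of a block) give $\abs{\widehat\mu(\xi)}\lesssim\#D_k^{-1/2+o(1)}$, provided $N_j$ grows slowly enough that $\#D_k\ge (N_1\cdots N_k)^{1-o(1)}$. For general $d$ I would take $\mu_d=\mu\otimes\cdots\otimes\mu$. Decay in $\abs\xi$ holds because some coordinate satisfies $\abs{\xi_i}\ge\abs\xi/\sqrt d$, and the remaining factors are bounded by one.

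\emph{Lower bound for $\mathfrak C_S$.} The upper bound $\le\#S$ is the union bound already noted. For the lower bound, fix $S$ and a small $\eta$, and choose a level $j$ deep enough that the digit separation $M_j$ exceeds $2^{\max S-\min S}$ times a large constant. Let $f=\delta$-approximation, i.e.\ $f=\abs{B_r}^{-1}\ind{B_r}$ with $r\ll$ the level-$j$ scale. Then $f*\mu_k$ is essentially $\mu_k$ smoothed at scale $r$. Its mass splits into $\prod_{i\le j}\#D_i$ cells of mass $1/\prod\#D_i$ each, and each cell has measure about $r^d$.

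The separation (b) makes the cells of $\mu_k$ disjoint across different $k\in S$ and within one $k$. The point is that the dilated grids $2^k\cdot$(digit positions) cannot overlap, because dilation by powers of $2$ up to $2^{\max S-\min S}$ is smaller than the gap. Choosing $\lambda$ just below the cell density, the level set $\{\Mmus Sf>\lambda\}$ contains $\#S$ disjoint families, each of total measure about $1/\lambda$. This gives $\lambda\abs{\{\cdot\}}\ge(1-\eta)\#S$. Making this disjointness and the uniformity of cell densities precise, at fixed dyadic scales for one fixed measure, is the main obstacle. It is exactly where increasingly separated digits are needed: I would try to arrange the gaps so that $2^{k}D_j$ and $2^{k'}D_j$ meet only near $0$, a negligible fraction.

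\emph{Hardy lower bound.} I would use the atom $a=f(\cdot-x_0)-f(\cdot+x_0)$, normalized. Its $\Ha$ norm is $\approx2$ if the separation $\abs{x_0}$ is comparable to $r$, or, to get exactly matching normalization, a suitable multi-bump atom whose $\Ha$ norm tends to its $L^1$ norm. Choose the offset $x_0$ smaller than the cell scale but larger than $r$. Then the positive and negative parts of $a*\mu_k$ land in disjoint regions, so $\abs{a*\mu_k}$ reproduces the positive test on twice the set at half the height. This yields $\mathfrak H_S\ge(1-\eta)\#S$. Finally, $\mathfrak C_S=\#S$ for every finite $S$ with $\#S\to\infty$ gives the infinite norms, and the dimension claims follow from $\dim_F=\dim_H=1$ via \Eqref{eq:allpowers} for all $\varepsilon<1/2$.
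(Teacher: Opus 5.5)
Your architecture matches the paper's: a Cantor--Moran measure in $[1,5/4]$, its tensor power, adjacent-mask decay, a test width between cylinder size and gap, and the union bound. But the one genuinely new ingredient is left unproved, and the method you suggest for it fails.

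\textbf{The digit lemma.} The construction needs \Lemref{lem:digits}: a set $B\subseteq\{0,\dots,N-1\}$ with $\#B=m$, gaps at least $h/2$ with $h\to\infty$, and $\norm{\BM B-\DM N}_\infty\lesssim\sqrt{\log N/m}$ at every real frequency. This uniform Dirichlet approximation, not smallness ``away from integer frequencies'', is what \Lemref{lem:adjacent} consumes. Your (a) cannot hold as written: a spread-out mask is close to $\DM{N_j}(\xi/N_j)$, whose modulus at $\xi=1/2$ is about $2/\pi$.

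If the digits are drawn from a sparse grid $M\Z\cap[0,N)$, then $\BM B$ has period $1/M$. So $\BM B(1/M)=1$, while $\DM N(1/M)=0$ for $M\mid N$. For the measure this is fatal. Write $\widehat\nu(\xi)=\prod_j\BM{B_j}(\xi/P_j)$ and take $M_n\mid N_n$. At $\xi=P_n/M_n$ every factor with $j\le n$ equals one, and the later factors have phases in arcs of length at most $2\pi/M_n$. Hence $|\widehat\nu(P_n/M_n)|$ stays bounded below and there is no decay at all. The same bias at frequency $1/M$ appears whenever the digits are unevenly distributed modulo the spacing, for example one digit in the first half of each block.

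The paper instead uses the renewal walk $X_j=Z_1+\dots+Z_j$, with $Z_i$ uniform on the $h+1$ integers of $[h/2,3h/2]$. The spacing is then automatic. Because the step law is spread over consecutive integers, $|1-\phi(t)|$ is bounded below away from $\Z$. This gives the bias bound $\sup_t|\mathbb E S_m(t)-\DM N(t)|\lesssim m^{-1/2}$. Through $|e^{-2\pi izt}-\phi(t)|\lesssim|1-\phi(t)|$, it also gives bounded Doob increments for Azuma. A frequency-grid union bound and a realization with $X_m<N$ finish the proof. Note also that your condition $\#D_k\ge P_k^{1-o(1)}$ forces $N_k$ to be huge relative to $P_{k-1}$, as in \eqref{eq:stageconditions}, rather than to grow slowly.

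\textbf{Two smaller points.} The step you call the main obstacle is easy and partly misdiagnosed. The dilates $\mu_k$ are carried by the pairwise disjoint cubes $[2^k,\tfrac54 2^k]^d$, so with a small test width the outputs of distinct scales never meet. No arrangement of $2^kD_j$ against $2^{k'}D_j$ is needed. The gaps are used only within one scale, where a single width must satisfy $2^{\max S}\rho\ll r\ll 2^{\min S}g\rho$ (cylinder length $\rho$, gap $g\rho$), as with the paper's $\delta=2^Kr\sqrt g$. Your ``$r\ll$ the level-$j$ scale'' omits the lower bound, which is what puts each whole cell inside the bump and yields the factor $1-o(1)$.

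For the Hardy test, a single atom on a cube $Q$ only gives $\norm a_{\Ha}\le\norm a_\infty|Q|$, which equals $\norm a_1$ exactly when $|a|$ is constant on $Q$. Two ball bumps cannot fill a cube when $d\ge2$, and an offset $|x_0|\gg r$ makes $|Q|/|B_r|$ large. So for $d\ge2$ your test loses a dimensional factor. That still gives an infinite $H^1\to L^{1,\infty}$ norm, but not $\mathfrak H_S=\#S$. The paper's half-cube atom $a_\delta=(2\delta)^{-d}(\ind{[0,\delta]}-\ind{(\delta,2\delta]})(x_1)\prod_{\ell\ge2}\ind{[0,2\delta]}(x_\ell)$ has two signed boxes tiling its cube, so $\norm{a_\delta}_{\Ha}=\norm{a_\delta}_1=1$. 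The gap condition then makes every other cylinder contribute exactly zero on the signed output boxes.
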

\begin{remark}[One measure and the original quantifiers]\label{rem:quantifiers}
The measure in \Thmref{thm:main} is fixed before $\varepsilon$, $S$, $f$ and the level $\lambda$ are chosen. Only $C_{d,\varepsilon}$ depends on the exponent. Taking $\varepsilon=1/4$ already meets Wright's hypothesis in \Probref{prob:original}. The equality \Eqref{eq:exact}, with $\#S\to\infty$, rules out every finite constant $C_{\mu_d}$, even when it is allowed to depend on the entire measure. Thus the conclusion is an individual counterexample, not merely nonuniformity within a class of measures.
\end{remark}

\begin{remark}[Optimal constants and arbitrary dyadic subfamilies]\label{rem:sharpness}
The probability-convolution union bound is $\#S$ for both specified input norms, so the equalities in \Eqref{eq:exact} attain the largest possible finite-family constants. Equality refers to a supremum over test functions, not to an asserted extremizer. Replacing the atomic norm by an equivalent Hardy norm changes the constants by fixed comparison factors, but not the failure of the endpoint estimate.

For any infinite $\mathcal K\subseteq\Z$, the finite-family identity implies failure of both weak endpoints on the supremum over $\mathcal K$, including either half of the dyadic scales.
\end{remark}
\begin{remark}[Fourier smoothing and Hardy cancellation]\label{rem:hardymeaning}
The operators in \Thmref{thm:main} remain bounded on $L^p$, $1<p<\infty$, by \cite{DR,SW}. What fails is their endpoint distributional control. An $L^1$ counterexample alone gives no Hardy conclusion, since the latter input space is smaller. In \Secref{sec:mainproof}, a normalized cube atom produces signed output boxes, with all other cylinders excluded by support. This verifies the required cancellation rather than inferring it from the positive test.
\end{remark}

\subsection{Organization}\label{sub:organization}
In \Secref{sec:mainproof}, we construct the fixed measure and prove the $L^1$ and Hardy endpoint assertions of \Thmref{thm:main} in separate arguments. We state the two Fourier lemmas before using them, and give their complete proofs in \Secref{sec:auxiliary}. 

\subsection{Notation}\label{sub:notation}
We use $\widehat\tau(\xi)=\int e^{-2\pi i x\cdot\xi}\dd\tau(x)$ and $\check\psi(x)=\int e^{2\pi i x\cdot\xi}\psi(\xi)\dd\xi$. Lebesgue measure is denoted by $|E|$. The weak quasi-norm is $\norm F_{1,\infty}=\sup_{\lambda>0}\lambda|\{|F|>\lambda\}|$. The notation $U\lesssim V$ allows a fixed constant independent of stage, frequency and finite scale family, with additional dependencies displayed as subscripts. We write $U\eqsim V$ when both inequalities hold.

An $L^\infty$ Hardy atom on $\R^d$ is a function supported in an axis-parallel cube $Q$ such that $\int a=0$ and $\norm a_\infty\le|Q|^{-1}$. The space $\Ha$ consists of $L^1$-convergent sums $f=\sum_j c_j a_j$ with $\sum_j|c_j|<\infty$, with norm the infimum of that sum. This is the standard atomic realization of $H^1$ \cite{Stein}. In particular,
\begin{equation}\label{eq:atomicembed}
 \norm f_1\le\norm f_{\Ha},\qquad\norm a_{\Ha}\le1.
\end{equation}
The constants in \Eqref{eq:exact} refer to this normalization. Convolution with a finite measure is well defined on $L^1$ equivalence classes: Tonelli gives
$\int\!\int |f(x-y)|\dd\mu_k(y)\dd x=\mu(\R^d)\norm f_1$ for positive $\mu$. Applying this also to differences of representatives, and taking a countable union over $k$, makes the maximal function independent of representatives almost everywhere. All level sets below use this convention.

For an integer $N\ge2$ and a finite nonempty integer set $B$, put
\begin{equation}\label{eq:masks}
 \DM N(t):=\frac1N\sum_{j=0}^{N-1}e^{-2\pi ijt},
 \qquad \BM B(t):=\frac1{\#B}\sum_{b\in B}e^{-2\pi ibt}.
\end{equation}
Both masks are $1$-periodic, have modulus at most one, and equal one at integers. Products $P_n=N_1\cdots N_n$ always have $P_0=1$.

\section{Proof of \texorpdfstring{\Thmref{thm:main}}{Theorem 1.1}}\label{sec:mainproof}
\subsection{Construction of the fixed measure}\label{sub:fixedmeasure}
We begin with two finite Fourier lemmas. The first supplies widely separated digits whose normalized exponential sum approximates a full Dirichlet mask. The second prevents loss of decay at the boundaries between frequency blocks. We state them here and prove them in \Secref{sec:auxiliary}.
\begin{lemma}[Separated renewal digits]\label{lem:digits}
There is an absolute constant $C_0$ such that, for every integer $h\ge64$ divisible by eight and all sufficiently large integers $m$, with $N=mh$, one can find $B\subseteq\{0,\ldots,N-1\}$ satisfying
\begin{equation}\label{eq:digitlemma}
 \#B=m,\qquad\min_{b\ne b'}|b-b'|\ge h/2,
 \qquad\norm{\BM B-\DM N}_\infty\le C_0\sqrt{\frac{\log N}{m}}.
\end{equation}
The constant $C_0$ is independent of $m$ and $h$.
\end{lemma}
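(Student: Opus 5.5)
We choose one digit at random from each block of length $h$, restricted to a central window so that the separation condition holds automatically. Split $\{0,\dots,N-1\}$ into the $m$ consecutive blocks $I_r=\{rh,\dots,rh+h-1\}$, $0\le r<m$. Let $J_r=\{rh+h/4,\dots,rh+3h/4-1\}$ be the middle half of $I_r$; it has $h/2$ elements, since $8\mid h$. Choose $b_r$ uniformly and independently from $J_r$, and set $B=\{b_0,\dots,b_{m-1}\}$. Two points in the middle halves of distinct blocks differ by at least $h/2$. Hence $\#B=m$ and $\min|b-b'|\ge h/2$ hold deterministically, and only the sup-norm estimate in \Eqref{eq:digitlemma} requires probability.

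The mean of the random mask is
\[
\mathbb E\,\BM B(t)=\frac1m\sum_r\frac{2}{h}\sum_{j\in J_r}e^{-2\pi ijt}
=e^{-2\pi i (h/4)t}\,\DM{h/2}(t)\cdot\DM m(ht).
\]
By contrast, $\DM N(t)=\DM h(t)\,\DM m(ht)$. This mean does not equal $\DM N$, which shows that a single window per block is the wrong normalization.

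The fix is to sample each $b_r$ from all of $I_r$ and then enforce separation by thinning. If $b_r\in I_r$ is uniform, then $\mathbb E\,\BM B=\DM N$ exactly, but adjacent blocks can produce digits closer than $h/2$. Instead, I would split the blocks into two interleaved families, even $r$ and odd $r$, and use a coupling. Take $b_r=rh+u_r$ with $u_r$ uniform on $\{0,\dots,h-1\}$ and i.i.d. The mean is exactly $\DM N$. To restore the spacing, I would argue via a derandomized deterministic shift: replace $u_r$ by $u_r$ for even $r$ and by $u_r$ restricted so that $|b_r-b_{r\pm1}|\ge h/2$. Concretely, for odd $r$, choose $u_r$ uniformly from the $h/2$ positions at distance $\ge h/2$ from its already chosen even neighbours. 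With those neighbours fixed, the residual mean is a function whose effect on the total mask is small only if it averages out.

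This is the main obstacle. I expect to handle it by a martingale argument in which the conditional expectation of each odd digit's exponential is computed exactly and then compared with the full block average. The discrepancy per block is $O(1)$, but it has random phase driven by the even neighbours. Averaged over $m$ blocks, it therefore contributes $O(\sqrt{\log N/m})$ through a second concentration step.

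For the concentration, fix $t$ and write $\BM B(t)-\DM N(t)=\frac1m\sum_r X_r$. Each $X_r$ is a bounded martingale difference, with $|X_r|\le 2$, with respect to the filtration generated by choosing even digits first and then odd digits. Azuma–Hoeffding, applied to the real and imaginary parts, gives
\[
\mathbb P\bigl(|\BM B(t)-\DM N(t)|>\lambda\bigr)\le 4e^{-cm\lambda^2}.
\]

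To pass from a fixed $t$ to the sup norm, use periodicity and restrict $t$ to a grid of $N^{2}$ points in $[0,1)$. Both masks are trigonometric polynomials of degree $<N$, so by Bernstein their derivatives are $\le 2\pi N$. The grid error is therefore $O(1/N)$, which is $\le\sqrt{\log N/m}$ since $m\le N$. A union bound over the $N^2$ grid points with $\lambda=C_0\sqrt{\log N/m}$ yields a failure probability of at most $4N^{2-cC_0^2}<1$ once $C_0$ is absolute and large and $N\ge 2$.

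Hence some realization works. The requirement that $m$ be sufficiently large is used only to make $m$ large relative to the absolute constants in the martingale comparison, and $h\ge 64$ ensures that the constrained windows are nonempty with size $\eqsim h$.
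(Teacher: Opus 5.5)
Your proof has a genuine gap, and it sits exactly at the step you flag as the main obstacle. The mean of $\BM B$ is not close to $\DM N$, and a second concentration step cannot remove a deterministic bias. For any $B$ and $h\nmid k$ one has $\DM N(k/h)=0$, while $\BM B(k/h)=\frac1m\sum_{b\in B}e^{-2\pi ibk/h}$. So the lemma forces the residues of $B$ modulo $h$ to be equidistributed to accuracy $C_0\sqrt{\log N/m}$, and your thinning destroys this.

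Write $b_r=rh+u_r$. Read literally, each odd offset is, given its even neighbours, uniform on $\{L,\dots,U\}$, where $L=\max\{0,u_{r-1}-h/2\}\le h/2-1$ and $U=\min\{h-1,u_{r+1}+h/2\}\ge h/2$. This set can have only two points, so it is neither of size $h/2$ nor $\eqsim h$. All interior odd offsets therefore share one law. It is symmetric under $u\mapsto h-1-u$ and favours the centre of the block. At $t=1/h$ the even blocks have mean $\DM h(1/h)=0$ and every phase $e^{-2\pi irht}$ equals $1$. Hence
\[
 \mathbb E\,\BM B(1/h)=\tfrac12\beta_h+O(m^{-1}),\qquad
 \abs{\beta_h}=\mathbb E\cos\bigl(2\pi(u_r-\tfrac{h-1}2)/h\bigr)\quad(r\text{ odd}),
\]
whereas $\DM N(1/h)=0$.

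The sum of $\cos(2\pi(u-\frac{h-1}2)/h)$ over $\{0,\dots,h-1\}$ vanishes, and its partial sums over $u<L$ and over $u>U$ are nonpositive. So every conditional average is nonnegative. It is at least an absolute constant whenever $u_{r-1}\in[5h/8,7h/8]$, an event of probability about $1/4$. Thus $\abs{\beta_h}\ge c>0$ uniformly in $h$.

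The per-block discrepancy therefore has a fixed nonzero mean, not a random phase, and it adds coherently over the $m/2$ odd blocks. Your Doob martingale concentrates $\BM B$ around $\mathbb E\,\BM B$, not around $\DM N$; in particular $\BM B-\DM N$ is not a sum of martingale differences. The same Azuma bound then gives $\abs{\BM B(1/h)-\DM N(1/h)}\ge c/4$ with high probability. That contradicts \Eqref{eq:digitlemma} once $m$ is large, so the construction itself fails, not just the argument.

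The missing idea is a random model whose law is itself equidistributed modulo $h$ while the spacing comes for free. The paper uses no thinning. It takes $B=\{X_1,\dots,X_m\}$ for the renewal walk $X_j=Z_1+\dots+Z_j$, with $Z_i$ uniform on the $h+1$ consecutive integers of $[h/2,3h/2]$.
\begin{itemize}
\item Every gap is a step, hence at least $h/2$.
\item Because the step law is uniform on $h+1$ consecutive integers, $\abs{1-\phi(t)}\gtrsim1$ when $\dist(t,\Z)\ge1/(100h)$. Near the integers the paper compares $\phi^j$ with $e^{-2\pi ijht}$. Together these give \Eqref{eq:bias}, so $\mathbb E S_m=\frac1m\sum_j\phi^j$ is within $O(m^{-1/2})$ of $\DM N$ at every frequency.
\item The stability inequality \Eqref{eq:stability} bounds the Doob increments uniformly in $t$, even though each step moves all later points.
\item The condition $X_m<N$ is secured by selecting an event of probability at least $1/3$, after the concentration has been established.
\end{itemize}
Your grid-and-union-bound step is fine and is essentially the paper's. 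What has to be replaced is the construction and its bias estimate.
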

We prove this lemma in \Subref{sec:renewal}. For each fixed $h$, the threshold for $m$ may depend on $h$. The subsequent induction fixes $h$ before choosing $m$, which is all we need.

\begin{lemma}[Adjacent-mask estimates]\label{lem:adjacent}
Let $N_j\ge2$, let $B_j\subseteq\{0,\ldots,N_j-1\}$ be nonempty, and suppose that $\norm{\BM{B_j}-\DM{N_j}}_\infty\le\eta_j\le1$. For $n\ge2$ and $1\le t\le N_n/2$,
\begin{equation}\label{eq:pair}
 |\BM{B_{n-1}}(t)\BM{B_n}(t/N_n)|
 \lesssim\frac{N_{n-1}^{-1}+\eta_{n-1}}t+\eta_n.
\end{equation}
For $N_n/2<t<N_n$, with $b=t/N_n$,
\begin{equation}\label{eq:triple}
 |\BM{B_{n-1}}(t)\BM{B_n}(t/N_n)\BM{B_{n+1}}(b/N_{n+1})|
 \lesssim N_n^{-1}+\eta_n+\eta_{n+1}.
\end{equation}
Both constants are absolute, including at resonant frequencies and as $t\uparrow N_n$.
\end{lemma}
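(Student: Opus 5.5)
The plan is to reduce both estimates to the explicit Dirichlet kernel, replacing each mask by $\DM{N_j}$ at a cost of $\eta_j$. By the triangle inequality and $|\BM{B}|,|\DM N|\le1$,
\[
|\BM{B_{n-1}}(t)\BM{B_n}(t/N_n)|\le|\DM{N_{n-1}}(t)|+\eta_{n-1}\ \text{(times }|\BM{B_n}(t/N_n)|\le1)
\]
is too crude on its own. Instead I split according to which factor is small.

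\emph{Pair bound.} For $1\le t\le N_n/2$ I use the standard Dirichlet estimate
\[
|\DM N(s)|=\Bigl|\frac{\sin(\pi Ns)}{N\sin(\pi s)}\Bigr|\le\min\Bigl(1,\frac{1}{2N\|s\|}\Bigr),
\]
where $\|s\|$ is the distance to $\Z$. Since $s=t/N_n\in(0,1/2]$, we get $|\DM{N_n}(t/N_n)|\lesssim 1/(N_n\cdot t/N_n)=1/t$. Hence $|\BM{B_n}(t/N_n)|\le 1/t\cdot O(1)+\eta_n$.

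Multiplying by $|\BM{B_{n-1}}(t)|\le1$ gives $\lesssim 1/t+\eta_n$, which is weaker than the claimed bound. To gain the factor $N_{n-1}^{-1}+\eta_{n-1}$, I also need the first factor to be small. Write
\[
|\BM{B_{n-1}}(t)|\le|\DM{N_{n-1}}(t)|+\eta_{n-1}.
\]
The difficulty is that $t$ may be an integer (a resonant frequency), where $\DM{N_{n-1}}(t)=1$. There is no constraint on $t$ beyond being real in $[1,N_n/2]$, so I split on integrality.

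If $\|t\|\ge 1/(2N_{n-1})$ or so, then $|\DM{N_{n-1}}(t)|\lesssim 1/(N_{n-1}\|t\|)$, and the product with $1/t$ from the second factor is the main term. The dangerous case is $t$ near an integer. There the second factor must do all the work: $|\DM{N_n}(t/N_n)|\lesssim 1/t$. The first factor contributes $1$, giving $1/t$ rather than $N_{n-1}^{-1}/t$.

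So the claimed bound $(N_{n-1}^{-1}+\eta_{n-1})/t$ cannot hold at integer $t$ unless the structure is used more carefully. I therefore expect the correct reading is that $t$ ranges over the relevant block and the estimate is combined as follows. Write the product of the two Dirichlet kernels exactly:
\[
\DM{N_{n-1}}(t)\,\DM{N_n}(t/N_n)=\frac{1-e^{-2\pi i t}}{N_{n-1}N_n(1-e^{-2\pi it/N_n})}\cdot\frac{1-e^{-2\pi i t N_{n-1}}}{\ldots}.
\]
The factor $1-e^{-2\pi i t}$ vanishes at integers, which rescues the resonant case. Indeed $\DM{N_n}(t/N_n)=\frac{1-e^{-2\pi i t}}{N_n(1-e^{-2\pi i t/N_n})}$, so for integer $t\in[1,N_n)$ it is exactly zero. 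More generally its modulus is $\lesssim\|t\|/t$.

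This gives the main step. Set $u=\|t\|$. Then
\[
|\DM{N_n}(t/N_n)|\lesssim\frac{\min(1,u)}{t},\qquad |\DM{N_{n-1}}(t)|\lesssim\min\Bigl(1,\frac1{N_{n-1}u}\Bigr).
\]
Their product is $\lesssim \min(u,1/N_{n-1})/t\le N_{n-1}^{-1}/t$. Expanding $\BM{}=\DM{}+E$ with $\|E\|_\infty\le\eta$ produces three error terms:
\[
\eta_{n-1}|\DM{N_n}(t/N_n)|\lesssim\eta_{n-1}/t,\qquad \eta_n|\BM{B_{n-1}}|\le\eta_n,
\]
which together give \Eqref{eq:pair}.

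\emph{Triple bound.} For $N_n/2<t<N_n$, set $b=t/N_n\in(1/2,1)$. Here $\DM{N_n}(b)$ is no longer small near $b\uparrow1$. The same factorization gives
\[
|\DM{N_n}(b)|\lesssim \frac{\min(1,u)}{N_n\|b\|}.
\]
Combined with the first factor this yields $\lesssim\min(u,1/N_{n-1})/(N_n(1-b))$, which blows up as $b\to1$. The third factor fixes this. Since $b/N_{n+1}\in(0,1/N_{n+1})$, write $b=1-\delta$ with $\delta<1/2$. Then $|\DM{N_{n+1}}(b/N_{n+1})|$ has the factor $|1-e^{-2\pi i b}|\eqsim\delta$ in its numerator over $N_{n+1}|\sin(\pi b/N_{n+1})|\eqsim b\ge1/2$. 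Hence
\[
|\DM{N_{n+1}}(b/N_{n+1})|\lesssim \delta.
\]
Multiplying, the $\delta$ cancels against $1/(N_n\delta)$ to leave $\lesssim N_n^{-1}$. The error terms are bounded as before by $\eta_n$ and $\eta_{n+1}$, after bounding the remaining factors by one. One must check that the error expansion never produces a term that needs the cancellation. The only candidates are $E_n\DM{N_{n+1}}$ and $\DM{N_n}E_{n+1}$, bounded by $\eta_n$ and $\eta_{n+1}$ respectively. The $\eta_{n-1}$ term does not appear because $\DM{N_n}(b)\DM{N_{n+1}}(b/N_{n+1})\lesssim N_n^{-1}$ already holds without the first factor.

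\emph{Main obstacle.} The main difficulty is handling the resonant regimes uniformly: $t$ near an integer in \Eqref{eq:pair}, and $t\uparrow N_n$ in \Eqref{eq:triple}. In both cases the obvious Dirichlet bound fails, and one must exploit the exact vanishing of the common numerator $1-e^{-2\pi i t}$ shared between consecutive masks.
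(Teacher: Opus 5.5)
Your proposal is correct. The pair estimate is the paper's argument. You both use $|\DM{N_n}(t/N_n)|\lesssim \dist(t,\Z)/t$ against $|\DM{N_{n-1}}(t)|\le\min\{1,(2N_{n-1}\dist(t,\Z))^{-1}\}$, and absorb $\BM{B_{n-1}}E_n$ into $\eta_n$. Your displayed product formula is garbled: the elided denominator is $1-e^{-2\pi i t}$, which cancels the numerator you point to, so in fact $\DM{N_{n-1}}(t)\DM{N_n}(t/N_n)=\DM{N_{n-1}N_n}(t/N_n)$. The modulus bounds you actually use are nevertheless right.

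For the triple estimate you take a different and shorter route. The paper reflects to $u=N_n-t$ using integer periodicity, applies \Eqref{eq:pair} at $u$ when $1\le u<N_n/2$, and multiplies by the third-mask bound $Cu/N_n+\eta_{n+1}$. It then has to control the cross terms $A_n\eta_{n+1}/u$ and $\eta_nu/N_n$, and treat $0<u<1$ separately. You instead bound the first factor by one and note that $|\DM{N_n}(b)\DM{N_{n+1}}(b/N_{n+1})|\le\frac{1}{2N_n(1-b)}\cdot\pi(1-b)\lesssim N_n^{-1}$ uniformly on $1/2<b<1$. Equivalently, this product is $\DM{N_nN_{n+1}}(b/N_{n+1})$, of modulus at most $(2N_nb)^{-1}$. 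The expansion $\BM{B_n}\BM{B_{n+1}}=\DM{N_n}\DM{N_{n+1}}+\DM{N_n}E_{n+1}+E_n\BM{B_{n+1}}$ then gives $N_n^{-1}+\eta_n+\eta_{n+1}$ directly; the $E_nE_{n+1}$ term you omit from your list sits inside $E_n\BM{B_{n+1}}$.

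Your route removes the reflection identity, the case split at $u=1$ and the cross terms. It also makes explicit something the paper's proof does only implicitly when it bounds $A_n\le 3/2$: the first factor contributes nothing in the upper half of the block. The paper's version gains only modularity, by reusing the pair bound in the adjacent-factor format it borrows from Lai--Shi--Xie.
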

We use the adjacent-factor argument of \cite[\Extref{LSX}{Section~5}]{LSX}. In \Subref{sec:adjacent} we verify both estimates for our masks, including the delicate boundary $0<N_n-t<1$.

\begin{lemma}[A fixed measure with separated cylinder masses]\label{lem:fixedmeasure}
There exists a singular probability measure $\sigma$ supported in $[1,5/4]$ such that, for every $0<\varepsilon<1/2$,
\[|\widehat\sigma(\xi)|\le C_\varepsilon(1+|\xi|)^{-\varepsilon}\qquad(\xi\in\R).\]
There also exist $r_n>0$, integers $Q_n\ge1$, and $g_n\to\infty$ with $r_n\to0$ and $Q_nr_n\to0$ such that $\supp\sigma$ is covered by $Q_n$ closed intervals of length $r_n$ and equal $\sigma$-mass $Q_n^{-1}$, whose distinct left endpoints are at distance at least $g_nr_n$.
\end{lemma}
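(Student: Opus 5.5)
We build $\sigma$ as an infinite Cantor--Moran product measure with digit sets from \Lemref{lem:digits}, and derive Fourier decay from \Lemref{lem:adjacent}. Choose integers $h_n\ge64$, divisible by eight, with $h_n\to\infty$ slowly (say $h_n=8\lceil n+8\rceil$). Then choose $m_n$ inductively, large enough for \Lemref{lem:digits} to apply with $h=h_n$. Set $N_n=m_nh_n$ and let $B_n\subseteq\{0,\dots,N_n-1\}$ be the corresponding digits, with $\eta_n:=C_0\sqrt{\log N_n/m_n}$. We also require $m_n$ to grow so fast that $\eta_n\lesssim N_{n-1}^{-1}$, $\eta_n\le P_{n-1}^{-1}$, and more generally $\eta_n\le P_n^{-1/2+\delta_n}$ with $\delta_n\to0$. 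This is possible because $\eta_n\to0$ as $m_n\to\infty$ while $P_{n-1}$ and $h_n$ are fixed. Define
\[
\sigma:=\delta_1*\mathop{\ast}_{n\ge1}\Bigl(\frac1{m_n}\sum_{b\in B_n}\delta_{\theta b/P_n}\Bigr),
\]
with a scaling $\theta\le 1/4$, so that the support lies in $1+\theta[0,1]\subseteq[1,5/4]$. Then
\[
\widehat\sigma(\xi)=e^{-2\pi i\xi}\prod_n\BM{B_n}(\theta\xi/P_n).
\]
The stage-$n$ cylinders are intervals of length $r_n:=\theta/P_n$ (they contain the tail, which lies in $[0,\theta/P_n]$). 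There are $Q_n=m_1\cdots m_n$ of them, each carrying mass $Q_n^{-1}$ by the product structure. Their left endpoints are $\theta\sum_{j\le n}b_j/P_j$, and two distinct ones first differ at some stage $j\le n$. They are then separated by at least
\[
\theta\Bigl(\frac{h_j}{2P_j}-\sum_{i>j}\frac{N_i}{P_i}\Bigr)\ge\frac{\theta h_j}{4P_j},
\]
since the lower-order terms total less than $2/P_j$. Relative to $r_n$ this gives a gap of at least $(h_j/4)(P_n/P_j)\ge \min(h_n/4,\,P_n/P_{n-1})$. So $g_n:=\min_{j\le n}h_j/4\to\infty$ works; for $j<n$ the factor $P_n/P_j\ge N_n$ helps. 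Moreover $Q_nr_n=\theta\prod_{j\le n}h_j^{-1}\to0$. Since $Q_nr_n\to0$, the support is Lebesgue-null, and $\sigma$ is singular.

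For the Fourier decay, take $\abs\xi$ large and write $\theta\abs\xi=P_{n-1}t$ with $1\le t<N_n$ (by symmetry $\xi>0$). Bound all factors other than the relevant two or three by one. If $t\le N_n/2$, apply \Eqref{eq:pair} with $B_{n-1}$ evaluated at $\theta\xi/P_{n-1}=t$ and $B_n$ at $t/N_n$, giving
\[
|\widehat\sigma(\xi)|\lesssim\frac{N_{n-1}^{-1}+\eta_{n-1}}{t}+\eta_n .
\]
Using $\abs\xi\approx P_{n-1}t$ and $\abs\xi^{\varepsilon}\le P_{n-1}^{\varepsilon}t^{\varepsilon}$, the first term is $\lesssim N_{n-1}^{-1}t^{-1}$. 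This is at most $\abs\xi^{-\varepsilon}$ provided $N_{n-1}\gtrsim P_{n-1}^{\varepsilon}$, i.e. $N_{n-1}^{1-\varepsilon}\gtrsim P_{n-2}^{\varepsilon}$. That holds for $\varepsilon<1/2$ and large $n$, since the $N_n$ grow superexponentially (e.g. $N_{n-1}\ge P_{n-2}^2$). The $\eta_n$ term needs $\eta_n\le (P_{n-1}N_n)^{-\varepsilon}=P_n^{-\varepsilon}$. Here is the catch: $\eta_n\approx\sqrt{\log N_n/m_n}$ is roughly $N_n^{-1/2}$ up to $\sqrt{h_n\log N_n}$, and $P_{n-1}$ is fixed while $N_n$ grows. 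So $\eta_n\le C_\varepsilon P_n^{-\varepsilon}$ holds for large $n$ whenever $\varepsilon<1/2$, which is why the range stops at $1/2$. If $N_n/2<t<N_n$, use \Eqref{eq:triple} and get a bound $N_n^{-1}+\eta_n+\eta_{n+1}$, which is $\lesssim P_n^{-\varepsilon}\lesssim\abs\xi^{-\varepsilon}$ similarly. Small $\xi$ is trivial, and finitely many stages are absorbed into $C_\varepsilon$.

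The main obstacle is the bookkeeping in the growth conditions. The inductive choice of $m_n$ must simultaneously satisfy the threshold in \Lemref{lem:digits}, the $\eta$-smallness for every $\varepsilon<1/2$, and superexponential growth of $N_n$. The subtle point is the $\eta_{n-1}/t$ term: since $\eta_{n-1}\gg N_{n-1}^{-1}$, we need $\eta_{n-1}/t\lesssim (P_{n-1}t)^{-\varepsilon}$ for $t\ge1$. This reduces to $\eta_{n-1}\lesssim P_{n-1}^{-\varepsilon}$, already arranged at stage $n-1$. I would verify these inequalities first with explicit choices, e.g. $h_n\sim n$ and $m_n=P_{n-1}^{n}$.
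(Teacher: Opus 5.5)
Your proposal is correct and follows the paper's proof essentially step by step. The ingredients match: a Cantor--Moran measure built from the digits of \Lemref{lem:digits}, with $m_n$ chosen so that $\eta_n\le P_n^{-1/2+\delta_n}$ and $N_n$ grows superexponentially; the pair and triple bounds of \Lemref{lem:adjacent} on the lower and upper halves of each block $[P_{n-1},P_n)$; and cylinder separation via the first differing digit. One slip to fix: $\min_{j\le n}h_j/4=h_1/4$ does not tend to infinity, so set $g_n:=h_n/4$, which your own bound $(h_j/4)(P_n/P_j)\ge\min(h_n/4,N_n)=h_n/4$ already justifies.
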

\begin{proof}[Proof of \Lemref{lem:fixedmeasure}]
\leavevmode\par\smallskip\noindent \par\medskip\noindent\textit{Step 1. The fixed digit sequence.}\quad
We construct the measure using a Cantor--Moran
scheme with stage-dependent bases $N_n$ and
digit sets $B_n$, subject to increasing digit
separation and uniform Fourier-mask approximation.
We require growing digit gaps and mask errors that decay at every fixed power below $1/2$. Set
\begin{equation}\label{eq:stageparameters}
 h_n:=8(n+10),\qquad\alpha_n:=\frac12-\frac1{n+2},\qquad P_0:=1.
\end{equation}
After $P_{n-1}$ has been chosen, choose a sufficiently
large positive integer $m_n$ and set $N_n=m_nh_n$.
Here $N_n$ is the base at stage $n$, while $m_n$
specifies the cardinality of the digit set $B_n$
to be selected using Lemma~\ref{lem:digits}.
We require
\begin{equation}\label{eq:stageconditions}
 \begin{split}
 N_n&\ge\max\{4P_{n-1},10^6\},\qquad P_n=P_{n-1}N_n,\\
 N_n^{-1}&\le P_n^{-\alpha_n},\qquad
 C_0\sqrt{\frac{h_n\log N_n}{N_n}}\le P_n^{-\alpha_n}.
 \end{split}
\end{equation}
We can impose the last inequality: with $n$, $h_n$ and $P_{n-1}$ already fixed, we have
\begin{equation}\label{eq:stagepossible}
 \frac{C_0\sqrt{h_n\log N/N}}{(P_{n-1}N)^{-\alpha_n}}
 =C_0\sqrt{h_n}\,P_{n-1}^{\alpha_n}\sqrt{\log N}\,N^{-1/(n+2)}
 \longrightarrow0,\quad (\text{as }N\to \infty).
\end{equation}
The exponent $1/(n+2)$ is strictly positive. Hence, by increasing the multiple $N_n$ of $h_n$, we meet every requirement in \Eqref{eq:stageconditions}. We also ensure $m_n$ exceeds the threshold in \Lemref{lem:digits}, and then select $B_n$ once and for all. By \Lemref{lem:digits} and \eqref{eq:stageconditions}, 
\begin{equation}\label{eq:errors}
 \eta_n:=\norm{\BM{B_n}-\DM{N_n}}_\infty\le P_n^{-\alpha_n}.
\end{equation}
In particular, the sets $B_n$ do not depend on the later choices of $\varepsilon$, the dyadic family $S$, or a test function. This order of quantifiers will let us use a \emph{single} measure for all finite families.

Next, choose independent random digits $b_n$, each uniformly distributed on the previously fixed sets $B_n$. The random series $\sum_{n\ge1}b_n/P_n$ converges uniformly over the digit choices, since
\begin{equation}\label{eq:tail}
 0\le\sum_{j>n}\frac{b_j}{P_j}
 \le\sum_{j>n}\frac{N_j-1}{P_j}
 =\sum_{j>n}\left(\frac1{P_{j-1}}-\frac1{P_j}\right)=\frac1{P_n}
\end{equation}
We denote its law by $\nu$. The telescoping identity also places its support in $[0,1]$. Independence now yields
\begin{equation}\label{eq:infiniteproduct}
 \nu=\mathop{*}_{n=1}^{\infty}\left(\frac1{m_n}\sum_{b\in B_n}\delta_{b/P_n}\right),
 \qquad \widehat\nu(\xi)=\prod_{n=1}^{\infty}\BM{B_n}(\xi/P_n).
\end{equation}
Because the partial sums converge uniformly, their characteristic functions converge at every real frequency. We will repeatedly bound the product by two or three selected factors, using that all other factors have modulus at most one.

\par\medskip\noindent\textit{Step 2. Fourier decay at all real frequencies.} We now estimate the Fourier transform without excluding any resonance. We treat the lower and upper halves of each frequency block separately.
Fix $0<\varepsilon<1/2$. Since $\alpha_j\to 1/2$ as $j\to \infty,$ we may choose $n_0\ge3$ such that $\alpha_j\ge\varepsilon$ for $j\ge n_0-1$. Then
\begin{equation}\label{eq:epsilonstage}
 \eta_j+N_j^{-1}\le2P_j^{-\varepsilon},\qquad j\ge n_0-1.
\end{equation}
For every $\xi\ge P_{n_0-1}$, choose $n\ge n_0$ with $P_{n-1}\le\xi<P_n$ and set $t=\xi/P_{n-1}\in[1,N_n)$. On the lower half, $t\le N_n/2$, by \eqref{eq:infiniteproduct} and \eqref{eq:pair},
\begin{equation}\label{eq:decaylower}
 |\widehat\nu(\xi)|\le |M_{B_{n-1}}(t)M_{B_n}(t/N_n)|\lesssim\frac{P_{n-1}^{-\varepsilon}}t+P_n^{-\varepsilon}
 \le2(P_{n-1}t)^{-\varepsilon}=2\xi^{-\varepsilon}.
\end{equation}
We used $t^{-1}\le t^{-\varepsilon}$ and $\xi<P_n$. On the upper half, the third factor $M_{B_{n+1}}(\frac{t}{N_nN_{n+1}})$
is essential near the resonant endpoint $t=N_n$. Applying the second estimate of \Lemref{lem:adjacent}, \eqref{eq:stageconditions},
\eqref{eq:errors} and $\alpha_j\ge \varepsilon$ for $j\ge n_0$, we obtain
\begin{align}\label{eq:decayupper}
\begin{split}
 |\widehat\nu(\xi)|
&\le |M_{B_{n-1}}(t)M_{B_n}(t/N_n)M_{B_{n+1}}(t/(N_n N_{n+1}))|\\
&\lesssim N_n^{-1}+\eta_n+\eta_{n+1}
 \lesssim P_n^{-\varepsilon}+P_{n+1}^{-\varepsilon}\lesssim\xi^{-\varepsilon}.    
\end{split}
\end{align}
For the remaining bounded range $0\le\xi<P_{n_0-1}$, we have
\begin{align*}
 |\widehat\nu(\xi)|\le
 1\le (1+P_{n_0-1})^{\varepsilon}    (1+\xi)^{-\varepsilon}.
\end{align*}
The estimate for negative frequencies follows from \(\widehat{\nu}(-\xi)=\overline{\widehat{\nu}(\xi)}\). Finally, let \(T(x)=1+x/4\) and define \(\sigma=T_\#\nu\), where \(T_\#\nu\) denotes the pushforward of \(\nu\) under \(T\). Then
\[
\widehat{\sigma}(\xi)=e^{-2\pi i\xi}\widehat{\nu}(\xi/4).
\]
We have therefore obtained all the claimed one-dimensional Fourier estimates for this one fixed measure.

\par\medskip\noindent\textit{Step 3. Cylinders, gaps, and singularity.} We use the growing separation between digits to construct disjoint cylinders covering the support of $\sigma$.
Let $r_n=(4P_n)^{-1}$. For each word $w=(b_1,\ldots,b_n)$, define the level-$n$ cylinder
\begin{equation}\label{eq:cylinder}
 I_{n,w}:=[a_{n,w},a_{n,w}+r_n],\qquad \text{where  }
 a_{n,w}:=1+\frac14\sum_{j=1}^n\frac{b_j}{P_j}.
\end{equation}
By \eqref{eq:tail}, every digit sequence determines a point in one of these cylinders. Since their union is a finite union of closed intervals, it contains \(\operatorname{supp}\sigma\). 

We next show that distinct level-$n$ cylinders are well separated. Let \(w=(b_1,\ldots,b_n)\) and \(w'=(b'_1,\ldots,b'_n)\) be two distinct words, and let \(i\) be the first index at which they differ. Thus \(b_j=b'_j\) for \(j<i\), while \(b_i\ne b'_i\).
By the digit separation property in Lemma~\ref{lem:digits}, we have
\[
|b_i-b'_i|\ge h_i/2.
\]
On the other hand, the contribution from all later digits satisfies
\[
\sum_{j=i+1}^n\frac{|b_j-b'_j|}{P_j}
\le \sum_{j>i}\frac{N_j-1}{P_j}
=\frac1{P_i},
\]
where the last equality follows from \eqref{eq:tail}.
Therefore, the difference at the first distinct digit cannot be canceled by the later digits. More precisely,
\begin{equation}\label{new2.14}
 4|a_{n,w}-a_{n,w'}|
=\left|\sum_{j=i}^n\frac{b_j-b'_j}{P_j}\right|\ge \frac{|b_i-b_i'|}{P_i}-\sum_{j=i+1}^n\frac{|b_j-b'_j|}{P_j}
\ge\frac{h_i/2-1}{P_i}.   
\end{equation}
If \(i<n\), then \(P_n/P_i\ge N_n\ge h_n\). Since \(h_i\ge64\), it follows that
\[
4P_n|a_{n,w}-a_{n,w'}|
\ge N_n(h_i/2-1)
\ge h_n/2-1.
\]
If \(i=n\), the same lower bound follows directly from \eqref{new2.14}.
Recalling that \(r_n=(4P_n)^{-1}\), we conclude that
\begin{equation}\label{eq:gaps}
|a_{n,w}-a_{n,w'}|\ge g_nr_n,
\qquad
g_n:=h_n/2-1\longrightarrow\infty.
\end{equation}

Thus the level-\(n\) cylinders are pairwise separated by a distance of at least \((g_n-1)r_n\), and hence are pairwise disjoint.
Let
$Q_n:=\prod_{j=1}^n m_j$
be the number of level-\(n\) cylinders. Since each cylinder corresponds to a unique prescribed prefix, independence and uniform distribution give 
\begin{equation}\label{eq:mass}
 \sigma(I_{n,w})=Q_n^{-1}.
\end{equation}
Finally, since the level-\(n\) cylinders cover \(\operatorname{supp}\sigma\) and each has length \(r_n\), we obtain
\begin{equation}\label{eq:singular}
 |\supp\sigma|\le Q_nr_n=\frac14\prod_{j=1}^n h_j^{-1}\xrightarrow[n\to\infty]{} 0.
\end{equation}
Hence \(\sigma\) is singular with respect to Lebesgue measure.
Moreover, since \(P_n\ge N_n\ge h_n\), we have
\[
r_n\sqrt{g_n}
=\frac{\sqrt{h_n/2-1}}{4P_n}
\le\frac1{4\sqrt{2h_n}}
\xrightarrow[n\to\infty]{} 0.
\]
\end{proof}

\subsection{\texorpdfstring{The $L^1$ weak endpoint}{The L1 weak endpoint}}\label{sub:l1proof}
Fix the one-dimensional $\sigma$ from \Lemref{lem:fixedmeasure}. We first establish the exact constant for arbitrary finite dyadic families.
\begin{lemma}[Exact finite-scale $L^1$ constant]\label{lem:l1sharp}
For every integer $d\ge1$, let $\mu_d:=\sigma^{\otimes d}$. Then $\mathfrak C_S(\mu_d)=\#S$ for every finite nonempty $S\subseteq\Z$.
\end{lemma}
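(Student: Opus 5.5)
The upper bound $\mathfrak C_S(\mu_d)\le\#S$ is the union bound recorded after \Eqref{eq:weakconstant}, since $\mu_d=\sigma^{\otimes d}$ is a probability measure. So the work is the lower bound. I will test against normalized cube indicators $f=|Q|^{-1}\ind{Q}$ with $Q=[0,\delta]^d$. For these, $\norm f_1=1$ and
\[
f*(\mu_d)_k(x)=\delta^{-d}\,(\mu_d)_k(x-Q).
\]
The plan is to choose a stage $n$ and a side length $\delta$ so that each dilated copy $(\mu_d)_k$, $k\in S$, looks like a family of well-separated point masses at the scale $\delta$. Each $f*(\mu_d)_k$ will then be essentially a constant times an indicator, and the outputs for different $k$ will live on disjoint regions.

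\textbf{Product cylinders.} By \Lemref{lem:fixedmeasure}, $\supp\mu_d$ is covered by $Q_n^d$ product cylinders $I_{n,w_1}\times\cdots\times I_{n,w_d}$. Each has side $r_n$ and $\mu_d$-mass exactly $Q_n^{-d}$. Two distinct cylinders differ in some coordinate, where their left endpoints are at distance at least $g_nr_n$. After dilation by $2^k$, we get $Q_n^d$ boxes of side $2^kr_n$ and mass $Q_n^{-d}$, whose corners are pairwise separated by at least $2^kg_nr_n$ in some coordinate.

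\textbf{Choice of parameters.} Let $k_-=\min S$ and $k_+=\max S$. Since $g_n\to\infty$ and $r_n\to0$, for $n$ large I can pick $\delta$ with
\[
2^{k_+}r_n\,\theta^{-1}\le\delta\le\theta\,2^{k_-}g_nr_n ,
\]
for any prescribed small $\theta>0$. For example, take $\delta=2^{k_+}r_n\sqrt{g_n}$. This works once $g_n$ is large compared to $2^{2(k_+-k_-)}$.

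\textbf{Analysis at one scale.} Fix $k\in S$ and a dilated box $R$ with corner $c$. Consider $x\in c+[2^kr_n,\delta]^d$. Then $x-Q\supseteq R$. Moreover, $x-Q$ has side $\delta<2^kg_nr_n-2^kr_n$, so it meets no other box: any other box is displaced by at least $2^kg_nr_n$ in some coordinate. Hence on this set
\[
f*(\mu_d)_k(x)=Q_n^{-d}\delta^{-d}.
\]
These sets are disjoint for different boxes, by the same separation. Take $\lambda=(1-\theta)Q_n^{-d}\delta^{-d}$. The level set $\{f*(\mu_d)_k>\lambda\}$ then has measure at least
\[
Q_n^d(\delta-2^kr_n)^d\ge Q_n^d\delta^d(1-\theta)^d .
\]

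\textbf{Disjointness across scales.} Each $f*(\mu_d)_k$ is supported in $2^k[1,5/4]^d+Q$. Since $\delta\ll 2^{k_-}$, these sets are pairwise disjoint for distinct $k$: in the first coordinate they lie in $[2^k,\,2^k\cdot 5/4+\delta]$, and $5/4<2$.

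\textbf{Conclusion.} The level set of $\Mmus{S}f$ at height $\lambda$ contains the disjoint union over $k\in S$ of the sets above. Therefore
\[
\lambda\bigl|\{\Mmus{S}f>\lambda\}\bigr|\ge\#S\,(1-\theta)^{d+1}.
\]
Letting $\theta\to0$ gives $\mathfrak C_S(\mu_d)\ge\#S$.

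\textbf{Main obstacle.} The delicate point is the simultaneous choice of $n$ and $\delta$. The cube must swallow a whole cylinder at the largest scale $k_+$, yet be narrower than the gaps at the smallest scale $k_-$. This is possible only because the separation ratio $g_n$ tends to infinity while the scale spread $2^{k_+-k_-}$ is fixed by $S$. This is exactly where the increasing digit separation from \Lemref{lem:digits} enters. The equal-mass property \Eqref{eq:mass} is what makes $f*(\mu_d)_k$ exactly constant on the relevant sets, so no mass is wasted below the level $\lambda$.
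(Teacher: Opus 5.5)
Your proof is correct and matches the paper's argument step for step: the same cube test of side $\delta=2^{k_+}r_n\sqrt{g_n}$, the same output boxes $\prod_\ell[2^k(a_{n,w_\ell}+r_n),\,2^ka_{n,w_\ell}+\delta]$, disjointness within a scale from the gaps $g_nr_n$ and across scales from the first coordinate, and the union bound for the upper estimate; the differences are only cosmetic (you normalize $f$, and you work with $k_\pm=\min S,\max S$ directly instead of first shifting $S$ into $\{0,\ldots,K\}$). The one step you leave implicit, $\delta<\tfrac34 2^{k_-}$ for the cross-scale disjointness, follows from $\delta=2^{k_+}r_n\sqrt{g_n}\to0$, since $r_n\sqrt{g_n}\to0$ is verified at the end of the proof of \Lemref{lem:fixedmeasure}.
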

\begin{proof}[Proof of \Lemref{lem:l1sharp}]
\leavevmode\par\smallskip\noindent \par\medskip\noindent\textit{Step 1. Product measures and the common scale shift.} We take the $d$-fold product of $\sigma$:
\begin{equation}\label{eq:productmeasure}
 \mu_d:=\sigma^{\otimes d},\qquad
 \widehat{\mu_d}(\xi)=\prod_{\ell=1}^d\widehat\sigma(\xi_\ell).
\end{equation}
At least one coordinate satisfies $|\xi_\ell|\ge|\xi|/\sqrt d$. Applying the 1D bound to that coordinate and $|\widehat\sigma|\le1$ to the others proves \Eqref{eq:allpowers}. The support is contained in $[1,5/4]^d$ and is Lebesgue-null by Fubini. A product cylinder has side length $r_n$ and mass $Q_n^{-d}$.

We first remove the position of the finite scale family. A common integer shift does not change either weak constant. To see this, put $F(x):=f(2^a x)$ and compute
\begin{equation}\label{eq:scaleshift}
 (f*(\mu_d)_{k+a})(2^a x)=(F*(\mu_d)_k)(x),
 \qquad\norm F_1=2^{-ad}\norm f_1.
\end{equation}
The same change of variables works for Hardy atoms. Indeed, $x\mapsto2^{ad}b(2^a x)$ is again an atom whenever $b$ is. Applying this observation to atomic decompositions, we get
$\norm{f(2^a\cdot)}_{\Ha}\le2^{-ad}\norm f_{\Ha}$. Applying the same argument with $-a$ yields the reverse inequality. Thus both weak constants are unchanged by the common index shift. We may assume $S\subseteq\{0,\ldots,K\}$ and write $J=\#S$.

We now fix $S$ and choose the stage $n$ sufficiently large (depending on $S$). Write $g:=g_n$, $r:=r_n$, and $Q:=Q_n$. Define
\begin{equation}\label{eq:tests}
 \delta:=2^Kr\sqrt g,\qquad q:=1-g^{-1/2},\qquad
 f:=\ind{[0,\delta]^d},\qquad \lambda:=qQ^{-d}.
\end{equation}
\par\medskip\noindent\textit{Step 2. One positive output box per cylinder and scale.} For each scale $k\in S$ and product cylinder $w=(w_1,\ldots,w_d)$, we place the output box
\begin{equation}\label{eq:goodboxes}
 G_{k,w}:=\prod_{\ell=1}^d
 [2^k(a_{n,w_\ell}+r),\,2^k a_{n,w_\ell}+\delta].
\end{equation}
If $x\in G_{k,w}$, the translated input $x-2^ky$ belongs to $[0,\delta]^d$ for every $y$ in the generating product cylinder. Hence the entire cylinder mass contributes, and we have
\begin{equation}\label{eq:boxproperties}
 (f*(\mu_d)_k)(x)\ge Q^{-d}>\lambda\quad(x\in G_{k,w}),
 \qquad |G_{k,w}|=(\delta-2^kr)^d\ge q^d\delta^d.
\end{equation}
To add the contributions of different cylinders, we must prove that the output boxes do not overlap. For distinct words at the same scale, at least one coordinate is separated by $(g-1)r$. The corresponding output intervals are disjoint because
\begin{equation}\label{eq:boxseparation}
 \frac{\delta}{2^k(g-1)r}
 \le\frac{2^K}{\sqrt g}\frac g{g-1}<1.
\end{equation}
At different scales, we use the location of the first coordinate. Its output intervals lie in
$[2^k,(5/4)2^k+\delta]$, which are disjoint when $\delta<1/4$. Both requirements hold eventually, since $K$ is fixed. Therefore the $JQ^d$ boxes are pairwise disjoint. Since $\norm f_1=\delta^d$, summing their volumes gives
\begin{equation}\label{eq:weaklower}
 \frac{\lambda\bigl|\{x:\max_{k\in S}|f*(\mu_d)_k(x)|>\lambda\}\bigr|}{\norm f_1}
 \ge\frac{qQ^{-d}}{\delta^d}\,JQ^d q^d\delta^d
 =Jq^{d+1}\longrightarrow J.
\end{equation}
\par\medskip\noindent\textit{Step 3. Matching upper bound.} For any probability measure $\tau$, Tonelli's theorem and the union bound give
\begin{equation}\label{eq:union}
 \lambda\bigl|\{x:\max_{k\in S}|f*\tau_k(x)|>\lambda\}\bigr|
 \le\sum_{k\in S}\norm{|f|*\tau_k}_1=J\norm f_1.
\end{equation}
Combining the lower bound with the general upper bound, and then taking $n\to\infty$ while $S$ stays fixed, we conclude $\mathfrak C_S(\mu_d)=J$.

\end{proof}

\subsection{The Hardy endpoint and conclusion}\label{sub:hardyproof}
The Hardy endpoint needs a genuinely cancellative test. The next argument proves the exact constant using one cube atom for each finite family.
\begin{lemma}[Exact finite-scale atomic Hardy constant]\label{lem:hardysharp}
For the same fixed $\mu_d$ and every finite nonempty $S\subseteq\Z$, we have $\mathfrak H_S(\mu_d)=\#S$.
\end{lemma}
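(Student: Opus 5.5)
The upper bound $\mathfrak H_S(\mu_d)\le\#S$ follows at once from \Eqref{eq:union} and \Eqref{eq:atomicembed}, since $\norm f_1\le\norm f_{\Ha}$. The content of the lemma is therefore the matching lower bound. I would prove it by adapting the positive test of \Lemref{lem:l1sharp} to a single cancellative cube atom, without appealing to that lemma. The shift argument from Step 1 of \Lemref{lem:l1sharp} was already checked for atoms, so I again assume $S\subseteq\{0,\ldots,K\}$ and write $J=\#S$. I then choose the stage $n$ large and set $g=g_n$, $r=r_n$, $Q=Q_n$, $\delta=2^Kr\sqrt g$ and $q=1-g^{-1/2}$ as in \Eqref{eq:tests}.

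\emph{The test atom.} Let $R=[0,2\delta]^d$ and split it along the first coordinate into $R^+=[0,\delta]\times[0,2\delta]^{d-1}$ and $R^-=[\delta,2\delta]\times[0,2\delta]^{d-1}$. Put
\[
 a:=|R|^{-1}\bigl(\ind{R^+}-\ind{R^-}\bigr).
\]
This function is supported in the cube $R$, has mean zero and satisfies $\norm a_\infty=|R|^{-1}$, so $\norm a_{\Ha}\le1$. I take the level $\lambda:=q|R|^{-1}Q^{-d}$.

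\emph{Signed output boxes.} Fix $k\in S$ and a product cylinder $w$. I would define a positive box $G^+_{k,w}$ and a negative box $G^-_{k,w}$ by the same coordinate intervals in the last $d-1$ coordinates, namely $[2^k(a_{n,w_\ell}+r),\,2^ka_{n,w_\ell}+2\delta]$. In the first coordinate they differ:
\[
 G^+_{k,w}:\ [2^k(a_{n,w_1}+r),\,2^ka_{n,w_1}+\delta],\qquad
 G^-_{k,w}:\ [2^k(a_{n,w_1}+r)+\delta,\,2^ka_{n,w_1}+2\delta].
\]
For $x\in G^\pm_{k,w}$ and every $y$ in the cylinder of $w$, the point $x-2^ky$ lies in $R^\pm$. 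Hence the contribution of that cylinder to $a*(\mu_d)_k(x)$ equals $\pm|R|^{-1}Q^{-d}$ exactly.

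The key step, and the one needing care, is that no other cylinder contributes at such $x$. A contribution from a cylinder $w'$ at scale $k$ requires $x-2^ky'\in R$ for some $y'$ in that cylinder. Distinct words are separated by $(g-1)r$ in some coordinate, and after dilation this becomes at least $2^k(g-1)r$. This exceeds $2\delta+2^kr$ once $2^{K+1}/\sqrt g<1$, which is a variant of \Eqref{eq:boxseparation}. So on $G^\pm_{k,w}$ we get $|a*(\mu_d)_k|=|R|^{-1}Q^{-d}>\lambda$, with no cancellation from other cylinders.

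\emph{Disjointness and counting.} The same separation inequality makes all boxes at a fixed scale pairwise disjoint. As in \Lemref{lem:l1sharp}, the first-coordinate windows $[2^k,(5/4)2^k+2\delta]$ separate different scales once $2\delta<1/4$. Each box has volume at least $(\delta-2^kr)(2\delta-2^kr)^{d-1}\ge q^d\delta(2\delta)^{d-1}=q^d|R|/2$. There are $2JQ^d$ boxes in total, so
\[
 \frac{\lambda\,|\{\Mmus S a>\lambda\}|}{\norm a_{\Ha}}\ \ge\ q|R|^{-1}Q^{-d}\cdot 2JQ^d\cdot q^d|R|/2=Jq^{d+1}\longrightarrow J
\]
as $n\to\infty$ with $S$ fixed. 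Combined with the upper bound, this gives $\mathfrak H_S(\mu_d)=J$.

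The only delicate point is the exclusion of other cylinders. It is here that the separation $g_n\to\infty$ from \Lemref{lem:fixedmeasure} is essential: it guarantees the signed contributions are isolated rather than partially cancelling.
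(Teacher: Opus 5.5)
Your proposal is correct and follows the paper's proof essentially step for step. It uses the same split cube atom, the same signed boxes $G^\pm_{k,w}$, and the same exclusion of all other cylinders via the gap $g_nr_n$ from \Lemref{lem:fixedmeasure}. It also uses the same threshold $q(2\delta)^{-d}Q^{-d}$, the same first-coordinate windows to separate scales, and the same union bound for the upper estimate.

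The only slip is cosmetic. The condition $2^{K+1}/\sqrt g<1$ does not literally give $2^k(g-1)r>2\delta+2^kr$ at $k=0$; you would need something like $2^{K+1}\sqrt g<g-2$. In fact only $2^k(g-1)r>2\delta$ is required, and it holds once $n$ is large, which is all you use.
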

\begin{proof}[Proof of \Lemref{lem:hardysharp}]
\leavevmode\par\smallskip\noindent \par\medskip\noindent\textit{Step 1. A normalized cancellative test.} The previous positive test is not a Hardy atom. We therefore build a mean-zero cube atom and use both signs of its convolution. Fix a nonempty finite $S\subseteq\{0,\ldots,K\}$, write $J:=\#S$ and use $Q=Q_n$, $r=r_n$, $g=g_n$, $\delta=2^Kr\sqrt g$ from \Eqref{eq:tests}. Define
\begin{equation}\label{eq:hardyatom}
 a_\delta(x):=(2\delta)^{-d}
 \bigl(\ind{[0,\delta]}(x_1)-\ind{(\delta,2\delta]}(x_1)\bigr)
 \prod_{\ell=2}^d\ind{[0,2\delta]}(x_\ell).
\end{equation}
For $d=1$, the empty product equals one. Direct computation gives
\begin{equation}\label{eq:atomnormalization}
 \supp a_\delta\subseteq[0,2\delta]^d,\qquad
 \int a_\delta=0,\qquad
 \norm{a_\delta}_\infty=(2\delta)^{-d},\qquad
 \norm{a_\delta}_1=\norm{a_\delta}_{\Ha}=1.
\end{equation}
The first three identities show that $a_\delta$ is a normalized Hardy atom. The equality $\norm{a_\delta}_{\Ha}=1$ then follows from $\norm{a_\delta}_1=1$ and \Eqref{eq:atomicembed}.

\par\medskip\noindent\textit{Step 2. Positive and negative boxes.} For a product cylinder with lower corner $(a_{w_1},\ldots,a_{w_d})$, we choose one box for each sign of the atom:
\begin{align}\label{eq:hardyboxes}
 G_{k,w}^{+}&:=[2^k(a_{w_1}+r),\,2^ka_{w_1}+\delta]
 \times\prod_{\ell=2}^d[2^k(a_{w_\ell}+r),\,2^ka_{w_\ell}+2\delta],\\
 G_{k,w}^{-}&:=[2^k(a_{w_1}+r)+\delta,\,2^ka_{w_1}+2\delta]
 \times\prod_{\ell=2}^d[2^k(a_{w_\ell}+r),\,2^ka_{w_\ell}+2\delta].\nonumber
\end{align}
The boundaries have measure zero and will play no role. Both boxes have volume
\begin{equation}\label{eq:hardyboxvolume}
 |G_{k,w}^{+}|=|G_{k,w}^{-}|=(\delta-2^kr)(2\delta-2^kr)^{d-1}.
\end{equation}
We now choose $n$ large enough that the following three inequalities hold simultaneously:
\begin{equation}\label{eq:hardyseparation}
 2^k(g-1)r>2\delta\quad(k\in S),\qquad
 2\delta<\tfrac14,\qquad
 2^Kr<\delta.
\end{equation}
Indeed, $2\delta/[2^k(g-1)r]\le 2^{K+1}\sqrt g/(g-1)\to0$ and $\delta\to0$ for fixed $K$.

If $x\in G_{k,w}^{+}$, then for every $y$ in the generating product cylinder $I_{n,w}$,
\begin{equation}\label{eq:hardypositive}
 x_1-2^ky_1\in[0,\delta],\qquad
 x_\ell-2^ky_\ell\in[0,2\delta]\quad(2\le\ell\le d).
\end{equation}
For $x$ in the interior of $G_{k,w}^{-}$, the first coordinate instead lies in $(\delta,2\delta)$ after translation. \par\medskip\noindent\textit{Step 3. Exact exclusion of other cylinders.} We must also rule out cancellation from \emph{other} cylinders. Choose one coordinate $\ell$ in which $v_\ell\ne w_\ell$. For either output box, we have $2^ka_{w_\ell}\le x_\ell\le2^ka_{w_\ell}+2\delta$. Using the gap estimate from \Lemref{lem:fixedmeasure}, the two possible orders give
\begin{equation}\label{eq:hardyexclusion}
 \begin{aligned}
 a_{v_\ell}\ge a_{w_\ell}+gr
 &\ \Longrightarrow\ x_\ell-2^ky_\ell\le2\delta-2^kgr<0,\\
 a_{v_\ell}\le a_{w_\ell}-gr
 &\ \Longrightarrow\ x_\ell-2^ky_\ell\ge2^k(g-1)r>2\delta.
 \end{aligned}
\end{equation}
In either case the translated point lies outside the atom's supporting cube. Thus every unwanted cylinder contributes \emph{exactly zero}, while the selected one contributes with the required sign. We obtain the exact identities
\begin{equation}\label{eq:hardysigned}
 (a_\delta*(\mu_d)_k)(x)=
 \begin{cases}
 +(2\delta)^{-d}Q^{-d},&x\in G_{k,w}^{+},\\
 -(2\delta)^{-d}Q^{-d},&x\in G_{k,w}^{-}.
 \end{cases}
\end{equation}
We next check that no two displayed boxes overlap. At a fixed scale, the positive and negative boxes are disjoint. Distinct words are separated in at least one coordinate by $2^k(g-1)r>2\delta$. Across scales, the first-coordinate slabs lie in $[2^k,(5/4)2^k+2\delta]$. When $k'>k\ge0$, we have
$(5/4)2^k+2\delta<(3/2)2^k<2^{k+1}\le2^{k'}$ by \Eqref{eq:hardyseparation}. We have therefore obtained $2JQ^d$ pairwise disjoint signed output boxes.

\par\medskip\noindent\textit{Step 4. The sharp lower bound.} Set $q:=1-g^{-1/2}$ and take the strict threshold $\lambda=q(2\delta)^{-d}Q^{-d}$. Because $2^kr/\delta\le g^{-1/2}$ for every $k\in S$, we can sum the exact box volumes from \Eqref{eq:hardyboxvolume} and obtain
\begin{align}\label{eq:hardysharp}
 &\lambda\bigl|\{x:\max_{k\in S}|a_\delta*(\mu_d)_k(x)|>\lambda\}\bigr|\\
 &\quad\ge q(2\delta)^{-d}Q^{-d}\sum_{k\in S}\sum_w
 2(\delta-2^kr)(2\delta-2^kr)^{d-1}\\
 &\quad\ge J(1-g^{-1/2})^2
 \bigl(1-\tfrac12g^{-1/2}\bigr)^{d-1}\longrightarrow J.\nonumber
\end{align}
Because $\norm{a_\delta}_{\Ha}=1$, we obtain $\mathfrak H_S(\mu_d)\ge J$. Conversely,
\begin{equation}\label{eq:hardyupper}
 \lambda|\{\max_{k\in S}|f*(\mu_d)_k|>\lambda\}|
 \le J\norm f_1\le J\norm f_{\Ha}
\end{equation}
by the general union bound. Hence $\mathfrak H_S(\mu_d)=J$. We also obtain the same identity for arbitrary finite $S\subseteq\Z$ by the common dilation in \Eqref{eq:scaleshift}.

\end{proof}

\begin{proof}[Proof of \Thmref{thm:main}]
Take $\sigma$ from \Lemref{lem:fixedmeasure} and set $\mu_d:=\sigma^{\otimes d}$. The product argument at the beginning of \Lemref{lem:l1sharp} proves \Eqref{eq:allpowers}. The construction gives a compact Lebesgue-null support inside $[1,5/4]^d$. Furthermore, \Lemref{lem:l1sharp} and \Lemref{lem:hardysharp} give \Eqref{eq:exact} for every finite nonempty $S$. By taking finite subsets with $\#S\to\infty$, we see that neither full dyadic maximal operator has a finite weak endpoint norm.

Finally, the one-dimensional support has the asserted full dimension. For $0<s<1$, 
the $s$-Riesz energy of $\sigma$ is defined by
\[
I_s(\sigma)
:=
\iint_{\mathbb R^2}
\frac{d\sigma(x)\,d\sigma(y)}{|x-y|^s}.
\]
Choose $s/2<\varepsilon<1/2$. The Fourier formula for Riesz energy \cite{Mattila} gives
\begin{equation}\label{eq:riesz}
 I_s(\sigma)=c_s\int_\R|\widehat\sigma(\xi)|^2|\xi|^{s-1}\dd\xi
 \lesssim_s1+C_\varepsilon^2\int_1^\infty t^{s-1-2\varepsilon}\dd t<\infty.
\end{equation}
Hence $\dim_H(\supp\sigma)\ge s$. The all-power estimate also gives Fourier dimension at least $2\varepsilon$ for every $\varepsilon<1/2$. Since Fourier dimension does not exceed Hausdorff dimension,
\begin{equation}\label{eq:dimensions}
 \dim_F(\supp\sigma)=\dim_H(\supp\sigma)=1.
\end{equation}
This completes the proof of all parts of \Thmref{thm:main}.

\end{proof}

\section{Proofs of the two Fourier lemmas}\label{sec:auxiliary}
\subsection{Selecting separated renewal digits}\label{sec:renewal}
\begin{proof}[Proof of \Lemref{lem:digits}]
We fix $h$ and construct the digits using a random walk with positive integer steps. We first prove stability and bias bounds for the step characteristic function. Then we obtain concentration uniformly over the entire frequency circle and select a sample with the required endpoint.
\par\medskip\noindent\textit{Step 1. Uniform characteristic-function stability.}
Fix $h\ge64$ divisible by eight. Let $Z_1,\ldots,Z_m$ be independent, each uniform on the $h+1$ integers in $[h/2,3h/2]$. Set
\begin{equation}\label{eq:renewal}
 X_0:=0,\quad X_j:=\sum_{i=1}^jZ_i,\quad N:=mh,\quad
 S_m(t):=\frac1m\sum_{j=1}^m e^{-2\pi iX_jt}.
\end{equation}
The characteristic function of a step is
\begin{equation}\label{eq:phidefinition}
 \phi(t):=\mathbb E e^{-2\pi iZ_1t}=e^{-2\pi iht}a_h(t),\qquad
 a_h(t):=\frac1{h+1}\sum_{v=-h/2}^{h/2}e^{-2\pi ivt}\in\R.
\end{equation}
We claim the absolute, uniform estimate
\begin{equation}\label{eq:stability}
 |e^{-2\pi izt}-\phi(t)|\le C|1-\phi(t)|
 \quad(z\in[h/2,3h/2]\cap\Z,\ t\in\R).
\end{equation}
Both sides are periodic, and magnitudes are unchanged by conjugation. It suffices to take $0<t\le1/2$. For $v=ht\le1/100$, symmetry gives
\begin{equation}\label{eq:smallstability}
 \begin{split}
 1-a_h(t)&=O(v^2),\qquad a_h(t)\ge1/2,\\
 |1-\phi(t)|&\ge a_h(t)|\sin(2\pi v)|\gtrsim v,\qquad
 |e^{-2\pi izt}-\phi(t)|\lesssim v.
 \end{split}
\end{equation}
For $1/100\le ht\le1$, use
\begin{equation}\label{eq:varianceidentity}
 1-|a_h(t)|^2
 =\frac1{(h+1)^2}\sum_{p,q=-h/2}^{h/2}
 \bigl(1-\cos(2\pi(p-q)t)\bigr).
\end{equation}
Restrict $p$ to $[-h/4,-h/8]\cap\Z$ and $q$ to $[0,h/8]\cap\Z$. These pairs have a fixed positive proportion, and
\[
 h/8\le|p-q|\le3h/8,\qquad
 1-\cos(2\pi(p-q)t)\gtrsim(ht)^2.
\]
Thus $1-|a_h(t)|^2\gtrsim10^{-4}$, so $|1-\phi(t)|\ge1-|a_h(t)|\gtrsim1$. The implicit lower bound is absolute, not dependent on $h$.

For $1\le ht\le h/2$, the geometric-sum formula gives
\begin{equation}\label{eq:largestability}
 |a_h(t)|=\frac{|\sin(\pi(h+1)t)|}{(h+1)|\sin(\pi t)|}
 \le\frac1{2(h+1)t}\le\frac12.
\end{equation}
Hence $|1-\phi(t)|\ge1/2$. The numerator in \Eqref{eq:stability} never exceeds two. At integer $t$ both sides are zero. This proves \Eqref{eq:stability} everywhere.

\par\medskip\noindent\textit{Step 2. Control of the expectation.}
Independence gives
\begin{equation}\label{eq:renewalmean}
 \mathbb ES_m(t)=\frac1m\sum_{j=1}^m\phi(t)^j.
\end{equation}
We prove
\begin{equation}\label{eq:bias}
 \sup_{t\in\R}|\mathbb ES_m(t)-\DM N(t)|\lesssim m^{-1/2}.
\end{equation}
Reduce again to $t\in[0,1/2]$. If $t\ge1/(100h)$, the denominator estimates just proved yield
\begin{equation}\label{eq:biaslarge}
 |\mathbb ES_m(t)|\le\frac2{m|1-\phi(t)|}\lesssim m^{-1},\qquad
 |\DM N(t)|\lesssim(Nt)^{-1}\lesssim m^{-1}.
\end{equation}
If $0<t<1/(100h)$, write $v=ht$ and $q=e^{-2\pi iv}$. Then
\begin{equation}\label{eq:biasparameters}
 |\phi-q|\lesssim v^2,\qquad |1-\phi|\eqsim|1-q|\eqsim v,
 \qquad\DM N(t)=\DM m(ht)\DM h(t).
\end{equation}
For $A(z)=z/(1-z)$, the exact difference of geometric sums is
\begin{align}\label{eq:geometricdifference}
 \frac1m\sum_{j=1}^m(\phi^j-q^j)
 &=\frac1m\bigl(A(\phi)-A(q)\bigr)(1-\phi^m)
   +\frac1m A(q)(q^m-\phi^m),\\
 |A(\phi)-A(q)|&=\frac{|\phi-q|}{|1-\phi|\,|1-q|}\lesssim1,\qquad
 |q^m-\phi^m|\le\min\{m|\phi-q|,2\}.\nonumber
\end{align}
Therefore
\begin{equation}\label{eq:biasfirst}
 \left|\frac1m\sum_{j=1}^m(\phi^j-q^j)\right|
 \lesssim m^{-1}+\min\{v,(mv)^{-1}\}\lesssim m^{-1/2}.
\end{equation}
Also, $|q-\DM h(t)|\lesssim v$ and
\begin{equation}\label{eq:biassecond}
 \left|\frac1m\sum_{j=1}^m q^j-\DM N(t)\right|
 =|\DM m(ht)|\,|q-\DM h(t)|
 \lesssim\min\{1,(mv)^{-1}\}v\lesssim m^{-1}.
\end{equation}
At integers both masks are one. Equations \Eqref{eq:biaslarge}--\Eqref{eq:biassecond} prove \Eqref{eq:bias}.

\par\medskip\noindent\textit{Step 3. Martingale differences and the frequency grid.}
Let $W_m(t)=mS_m(t)$ and $\mathcal F_i=\sigma(Z_1,\ldots,Z_i)$. Conditional expectation gives
\begin{equation}\label{eq:conditionalexpectation}
 \mathbb E(W_m(t)\mid\mathcal F_i)
 =\sum_{j=1}^i e^{-2\pi iX_jt}
  +e^{-2\pi iX_it}\sum_{r=1}^{m-i}\phi(t)^r.
\end{equation}
Subtracting the formula at $i-1$ produces exactly
\begin{equation}\label{eq:doobexact}
 \begin{split}
 &\mathbb E(W_m(t)\mid\mathcal F_i)-\mathbb E(W_m(t)\mid\mathcal F_{i-1})\\
 &\qquad=e^{-2\pi iX_{i-1}t}
 (e^{-2\pi iZ_it}-\phi(t))\sum_{r=0}^{m-i}\phi(t)^r.
 \end{split}
\end{equation}
For noninteger $t$, the geometric sum is at most $2/|1-\phi(t)|$, and \Eqref{eq:stability} bounds every increment by an absolute constant. At integers the increments vanish. Applying the bounded-increment inequality of Azuma \cite{Azuma} to real and imaginary parts yields
\begin{equation}\label{eq:azuma}
 \mathbb P\bigl(|S_m(t)-\mathbb ES_m(t)|>u\bigr)\le4e^{-cmu^2},
 \qquad t\in\R.
\end{equation}
Every realization has $0<X_j\le3N/2$. In particular,
$\norm{S_m'}_\infty\le3\pi N$, $\norm{(\mathbb ES_m)'}_\infty\le3\pi N$ and $\norm{(\DM N)'}_\infty\le2\pi N$. On $\mathcal G_N=\{j/N^3:0\le j\le N^3\}$, choose $u=K\sqrt{\log N/m}$. Equation \Eqref{eq:azuma} yields
\begin{equation}\label{eq:gridunion}
 \mathbb P\left(\max_{t\in\mathcal G_N}|S_m(t)-\mathbb ES_m(t)|>u\right)
 \le4(N^3+1)N^{-cK^2}\le N^{-2}
\end{equation}
for all sufficiently large $N$, after fixing the absolute $K$ so that $cK^2\ge7$. The bias in \Eqref{eq:bias} and the derivative bound add at most $Cm^{-1/2}+CN^{-2}$ to the supremum. They are absorbed by an absolute multiple of $\sqrt{\log N/m}$ because $m\le N$. Thus
\begin{equation}\label{eq:uniformprobability}
 \mathbb P\left(\sup_{t\in\R}|S_m(t)-\DM N(t)|
   >C_1\sqrt{\frac{\log N}{m}}\right)\le N^{-2}
\end{equation}
for a sufficiently large absolute $C_1$ and sufficiently large $N$. Here $m=N/h\le N$, so the grid error is absorbed by the displayed threshold. Periodicity extends the estimate from $[0,1]$ to every real frequency.

\par\medskip\noindent\textit{Step 4. Selection inside the base interval.}
The random sum $X_m-N=\sum_{i=1}^m(Z_i-h)$ is symmetric. For independent integer-valued variables, the maximum atom of a convolution does not exceed the maximum atom of either factor. Hence
\begin{equation}\label{eq:insideevent}
 \mathbb P(X_m=N)\le(h+1)^{-1},\qquad
 \mathbb P(X_m<N)=\tfrac12\bigl(1-\mathbb P(X_m=N)\bigr)\ge\tfrac13.
\end{equation}
The event in \Eqref{eq:insideevent} intersects the good event in \Eqref{eq:uniformprobability} for large $N$. Select one realization in the intersection and put $B=\{X_1,\ldots,X_m\}$. Then
\begin{equation}\label{eq:digitfinish}
 0<X_1<\cdots<X_m<N,\qquad X_{j+1}-X_j=Z_{j+1}\ge h/2,
 \qquad\BM B=S_m.
\end{equation}
The selected points are distinct, lie in the required integer interval, and have gaps at least $h/2$. Together with \Eqref{eq:uniformprobability}, this proves \Lemref{lem:digits}. Notice that concentration was obtained \emph{before} selecting the event $X_m<N$; we did not condition the martingale on that event.

\end{proof}

\subsection{The adjacent-mask estimates}\label{sec:adjacent}
\begin{proof}[Proof of \Lemref{lem:adjacent}]
We first compare two neighboring masks in the lower half of a block. In the upper half, we reflect toward the nearest integer endpoint and use a third factor.
For $A,B\ge2$ and $1\le t\le B/2$, put $d(t)=\dist(t,\Z)$. The Dirichlet identities imply
\begin{equation}\label{eq:Dirichletbounds}
 |\DM A(t)|\le\min\{1,(2A d(t))^{-1}\},\qquad
 |\DM B(t/B)|\le C\frac{d(t)}t.
\end{equation}
For the second estimate, use
\[
 |\DM B(t/B)|=\frac{|\sin(\pi t)|}{B|\sin(\pi t/B)|},\quad
 |\sin(\pi t)|\le\pi d(t),\quad B\sin(\pi t/B)\ge2t.
\]
At integral $t\in[1,B/2]$, the numerator is zero and the denominator is nonzero.

Write $\BM{B_j}=\DM{N_j}+e_j$, with $\norm{e_j}_\infty\le\eta_j$. Using $|\BM{B_{n-1}}|\le1$ for the error in the second factor gives
\begin{align}\label{eq:pairproof}
 |\BM{B_{n-1}}(t)\BM{B_n}(t/N_n)|
 &\le |\DM{N_{n-1}}(t)\DM{N_n}(t/N_n)|
       +\eta_{n-1}|\DM{N_n}(t/N_n)|+\eta_n\\
 &\lesssim\frac1{N_{n-1}t}+\frac{\eta_{n-1}}t+\eta_n.\nonumber
\end{align}
We have proved \Eqref{eq:pair} also when $t$ is an integer, because \Eqref{eq:Dirichletbounds} was interpreted at those points without division by $d(t)$.

For $N_n/2<t<N_n$, set $u=N_n-t$ and $b=t/N_n=1-u/N_n$. Integer digits give the exact reflection identity
\begin{equation}\label{eq:reflection}
 |\BM{B_{n-1}}(t)\BM{B_n}(t/N_n)|
 =|\BM{B_{n-1}}(u)\BM{B_n}(u/N_n)|.
\end{equation}
Since $1/2<b<1$,
\begin{equation}\label{eq:thirdmask}
 |\DM{N_{n+1}}(b/N_{n+1})|
 =\frac{|\sin(\pi b)|}{N_{n+1}|\sin(\pi b/N_{n+1})|}
 \le C(1-b)=C u/N_n.
\end{equation}
This gives the third-mask upper bound $Cu/N_n+\eta_{n+1}$. The factor matters precisely when $u=N_n-t$ approaches zero.

If $0<u<1$, that factor alone bounds the triple product by $C/N_n+\eta_{n+1}$. If $1\le u<N_n/2$, let $A_n=N_{n-1}^{-1}+\eta_{n-1}\le3/2$. Applying \Eqref{eq:pair} at $u$ gives
\begin{align}\label{eq:tripleproof}
  |\BM{B_{n-1}}(t)\BM{B_n}(t/N_n)\BM{B_{n+1}}(b/N_{n+1})|
 &\lesssim(A_n/u+\eta_n)(u/N_n+\eta_{n+1})\\
 &= A_n/N_n+A_n\eta_{n+1}/u
     +\eta_nu/N_n+\eta_n\eta_{n+1}\\
 &\lesssim N_n^{-1}+\eta_n+\eta_{n+1}.\nonumber
\end{align}
In the last step we used only $u\ge1$, $u\le N_n/2$, and $\eta_j\le1$. The separate case $0<u<1$ covers the immediate resonant edge, completing \Eqref{eq:triple}.

\end{proof}

\medskip
\noindent\textbf{Conflicts of Interest}\space The authors declare that there are no conflicts of interest.

\medskip
\noindent{\bf AI disclosure.}
During the preparation of this manuscript, the authors used ChatGPT and Codex to discuss possible proof strategies, assist in drafting and checking parts of the mathematical arguments, organize references, and improve the clarity and readability of the exposition. All mathematical statements and proofs in the final manuscript were written, reviewed, and verified by the authors, who take full responsibility for the content of this manuscript.


\medskip
\begin{thebibliography}{99}\raggedright

\bibitem{Azuma} K. Azuma, Weighted sums of certain dependent random variables, Tohoku Math. J. (2) 19 (3) (1967) 357--367. \doi{10.2748/tmj/1178243286}.

\bibitem{CLLS} S. S. Choudhary, J. Li, C.-W. Liang and C.-Y. Shen, Lacunary $\delta$-discretised spherical maximal operators, preprint (2025), revised September 13, 2026. \arxiv{2507.09962v2}.

\bibitem{Christ} M. Christ, Weak type $(1,1)$ bounds for rough operators, Ann. of Math. (2) 128 (1) (1988) 19--42. \doi{10.2307/1971461}.

\bibitem{CK} L. Cladek and B. Krause, Improved endpoint bounds for the lacunary spherical maximal operator, Anal. PDE 17 (2024) 2011--2032. \doi{10.2140/apde.2024.17.2011}.

\bibitem{DR} J. Duoandikoetxea and J. L. Rubio de Francia, Maximal and singular integral operators via Fourier transform estimates, Invent. Math. 84 (3) (1986) 541--561. \doi{10.1007/BF01388746}.

\bibitem{GOPSS} L. Grafakos, D. Oliveira e Silva, M. Pramanik, A. Seeger and B. Stovall, Some problems in harmonic analysis, preprint (2017). \arxiv{1701.06637v1}.

\bibitem{HKLL} S. Ham, J. Kah, S. Lee and J. Li, On Rubio de Francia's maximal theorem, preprint (2026). \arxiv{2602.03465v1}.

\bibitem{HL} K. Hambrook and I. \L aba, On the sharpness of Mockenhaupt's restriction theorem, Geom. Funct. Anal. 23 (4) (2013) 1262--1277. \doi{10.1007/s00039-013-0240-9}.

\bibitem{LParith} I. \L aba and M. Pramanik, Arithmetic progressions in sets of fractional dimension, Geom. Funct. Anal. 19 (2) (2009) 429--456. \doi{10.1007/s00039-009-0003-9}.

\bibitem{LPmax} I. \L aba and M. Pramanik, Maximal operators and differentiation theorems for sparse sets, Duke Math. J. 158 (3) (2011) 347--411. \doi{10.1215/00127094-1345644}.

\bibitem{LSX} C.-K. Lai, R. Shi and Y.-H. Xie, A new type of deterministic Salem sets and its spectrality, preprint (2026). \arxiv{2609.22772v1}.

\bibitem{LLLS} S. Lee, J. Li, C.-W. Liang and C.-Y. Shen, On the endpoint estimate for discrete spherical average over sparse sequences, preprint (2026), revised October 2, 2026. \arxiv{2608.03004v5}.

\bibitem{Mattila} P. Mattila, Fourier Analysis and Hausdorff Dimension, Cambridge Studies in Advanced Mathematics, vol. 150, Cambridge University Press, Cambridge, 2015. \doi{10.1017/CBO9781316227619}.

\bibitem{STW} A. Seeger, T. Tao and J. Wright, Pointwise convergence of lacunary spherical means, in: Harmonic Analysis at Mount Holyoke, Contemporary Mathematics, vol. 320, American Mathematical Society, Providence, RI, 2003, pp. 341--351. \doi{10.1090/conm/320/05617}.

\bibitem{SW} A. Seeger and J. Wright, Problems on averages and lacunary maximal functions, Banach Center Publ. 95 (2011) 235--250. \doi{10.4064/bc95-0-11}.

\bibitem{Stein} E. M. Stein, Harmonic Analysis: Real-Variable Methods, Orthogonality, and Oscillatory Integrals, Princeton Mathematical Series, vol. 43, Princeton University Press, Princeton, NJ, 1993. \mbox{\href{https://press.princeton.edu/books/paperback/9780691113876/harmonic-analysis}{Publisher}}.

\end{thebibliography}
\end{document}